\newtheorem{thm}{Theorem}[section]
\newtheorem{conj}[thm]{Conjecture}
\newtheorem{cor}[thm]{Corollary}
\newtheorem{lemma}[thm]{Lemma}
\newtheorem{prop}[thm]{Proposition}
\theoremstyle{definition}
\newtheorem{ex}[thm]{Example}
\newcommand{\cb}{c_2}
\newcommand{\pc}{p}
\DeclareMathOperator{\mr}{mr}
\DeclareMathOperator{\rank}{rank}
\DeclareMathOperator{\cc}{cc}
\title{Subgraph complementation and minimum rank}
\author[1]{Calum Buchanan\thanks{Calum.Buchanan@uvm.edu}}
\author[2]{Christopher Purcell}
\author[1]{Puck Rombach}
\affil[1]{\footnotesize{Department of Mathematics \& Statistics, University of Vermont}}
\affil[2]{\footnotesize{Department of Mathematics, University of West Bohemia}}
\date{\today}
\begin{document}

\maketitle

\begin{abstract}
Any finite simple graph $G = (V,E)$ can be represented by a collection $\mathscr{C}$ of subsets of $V$ such that $uv\in E$ if and only if $u$ and $v$ appear together in an odd number of sets in $\mathscr{C}$. Let $\cb(G)$ denote the minimum cardinality of such a collection. This invariant is equivalent to the minimum dimension of a faithful orthogonal representation of $G$ over $\mathbb{F}_2$ and is closely connected to the minimum rank of $G$. We show that $\cb (G) = \mr(G,\mathbb{F}_2)$ when $\mr(G,\mathbb{F}_2)$ is odd, or when $G$ is a forest. Otherwise, $\mr(G,\mathbb{F}_2)\leq \cb (G)\leq \mr(G,\mathbb{F}_2)+1$. Furthermore, we show that the following are equivalent for any graph $G$ with at least one edge: {\em i.} $\cb(G)=\mr(G,\mathbb{F}_2)+1$; {\em ii.} the adjacency matrix of $G$ is the unique matrix of rank $\mr(G,\mathbb{F}_2)$ which fits $G$ over $\mathbb{F}_2$; {\em iii.} there is a minimum collection $\mathscr{C}$ as described in which every vertex appears an even number of times; and {\em iv.} for every component $G'$ of $G$, $\cb(G') = \mr(G',\mathbb{F}_2) + 1$. We also show that, for these graphs, $\mr(G,\mathbb{F}_2)$ is twice the minimum number of tricliques whose symmetric difference of edge sets is $E$. Additionally, we provide a set of upper bounds on $\cb(G)$ in terms of the order, size, and vertex cover number of $G$. Finally, we show that the class of graphs with $\cb(G)\leq k$ is hereditary and finitely defined. For odd $k$, the sets of minimal forbidden induced subgraphs are the same as those for the property $\mr(G,\mathbb{F}_2)\leq k$, and we exhibit this set for $\cb(G)\leq2$.
\end{abstract}

\section{Introduction}

Given any two finite simple graphs $G$ and $H$ on a set $V$ of $n$ vertices, one can obtain $G$ from $H$ by a sequence of {\em subgraph complementations}, the operation of complementing the edge set of an induced subgraph. That is, there exist graphs $H_0, H_1, \ldots, H_k$ such that $H_0 = H$, $H_k = G$, and $H_{i}$ is obtainable from $H_{i-1}$ by a subgraph complementation for each $i = 1, 2, \ldots, k$. Trivially, one can complement each edge of $G$ which is not an edge of $H$ and each non-edge of $G$ which is an edge of $H$. It is natural to ask for the minimum number of subgraph complementations needed to obtain $G$ from $H$, which we call the {\em subgraph complementation distance} between $G$ and $H$. This problem is equivalent to that of finding the subgraph complementation distance between the empty graph, $\overline{K}_n$, and the {\em symmetric difference} of $G$ and $H$, the graph on $V$ whose edges appear in exactly one of $G$ or $H$. Thus, we are particularly interested in the subgraph complementation distance between $G$ and $\overline{K}_n$, which we call the {\em subgraph complementation number of $G$} and denote by $\cb(G)$.
\par

The operation of subgraph complementation was defined by Kami{\'n}ski, Lozin, and Milani{\v{c}}~\cite{kaminski2009recent} in the study of graphs with bounded clique-width. Variations of the operation appeared earlier, such as complementation of the subgraph induced by the open neighborhood of a vertex, called {\em local complementation}~\cite{bouchet1994circle}. For a graph class $\mathscr{G}$ and graph $G$, the problem of determining whether some subgraph complementation of $G$ results in a graph in $\mathscr{G}$ is studied in~\cite{fomin2020subgraph}.
\par

We call a collection $\mathscr{C}$ of subsets of $V$ with respect to which successive subgraph complementations of $\overline{K}_n$ result in $G$ a {\em subgraph complementation system} for $G$. Equivalently, $\mathscr{C}$ is a subgraph complementation system for $G$ if each pair of adjacent vertices in $G$ is contained in an odd number of sets in $\mathscr{C}$ and each pair of non-adjacent vertices in an even number.
Multiple problems have been posed which are equivalent to finding subgraph complementation systems or to finding $c_2(G)$. Vatter asked for ways to express the edge set of $G$ as a sum modulo 2 of edge sets of cliques~\cite{317716}; a subgraph complementation system for $G$ may be interpreted as a collection of complete graphs on subsets of $V$ whose symmetric difference of edge sets is $E(G)$, and $\cb(G)$ is the minimum cardinality of such a collection. An {\em orthogonal representation} of $G$ over a field $\mathbb{F}$ is an assignment of vectors from $\mathbb{F}^d$ to the vertices of $G$ such that nonadjacent vertices are represented by orthogonal vectors. Lov\'asz introduced orthogonal representations over $\mathbb{R}$ to bound the Shannon capacity of a graph~\cite{lovasz1979shannon}. Alekseev and Lozin examined the minimum dimension of an orthogonal representation in which adjacent vertices are represented by vectors whose dot product is 1~\cite{alekseev2001orthogonal}.\footnote{This is sometimes called an {\em exact dot product representation}~\cite{tucker2007exact},\cite{minton2008dot}.} When the field in question is $\mathbb{F}_2$, the field of order $2$, this is equivalent to the problem of finding $c_2(G)$.
\par

An orthogonal representation of $G$ over $\mathbb{F}$ is called {\em faithful} if adjacent vertices are represented by nonorthogonal vectors. When $\mathbb{F}=\mathbb{F}_2$, these are the representations studied in~\cite{alekseev2001orthogonal}. A faithful orthogonal representation of $G$ over $\mathbb{F}_2$ of dimension $d$ induces a subgraph complementation system $\mathscr{C} = \{C_1, C_2, \ldots, C_d\}$ of $G$ by including a vertex $v$ in $C_i$ if and only if the $i$th entry of the vector associated to $v$ is 1. Similarly, given a subgraph complementation system $\mathscr{C}$ for $G$, we may assign to each $v\in V$ a vector from $\mathbb{F}_2^d$ with entry $i$ equal to 1 if $v\in C_i$, and $0$ otherwise. The problem of minimizing the dimension of a faithful orthogonal representation over $\mathbb{R}$ is addressed in \cite{lovasz1989orthogonal}. These representations have been generalized in many ways, one of which we have seen in the previous paragraph. In the most general case, we have {\em vector representations} of $G$, introduced by Parsons and Pisanski in~\cite{parsons1989vector}.
\par

Given a graph $G$ and a faithful orthogonal representation of $G$ over $\mathbb{F}_2$, consider the $n\times k$ matrix $M$ with rows given by the vectors in the representation (when it is helpful to specify the corresponding subgraph complementation system, we write $M = M(\mathscr{C})$ or say $M$ is associated to $\mathscr{C}$). An off-diagonal entry of the $n\times n$ matrix $A = MM^T \pmod{2}$ is 0 if and only if the corresponding vertices are nonadjacent. That is, the off-diagonal zeros of $A$ are precisely those of the adjacency matrix of $G$. A matrix with this property is said to {\em fit} $G$. It is a well-studied problem to determine the minimum rank of a matrix which fits $G$ over a given field $\mathbb{F}$. In particular, the minimum rank of $G$ over $\mathbb{R}$ has been of interest for its equivalence to the determination of the maximum multiplicity of an eigenvalue among the family of matrices which fit $G$~\cite{fallat2007minimum}. We denote by $\mr(G,\mathbb{F})$ the minimum rank of a symmetric matrix over $\mathbb{F}$ which fits $G$.
\par

It is not hard to see that the rank of a matrix $M$ over a field $\mathbb{F}$ is at least the rank of $MM^T$. Thus, we see that the dimension of a faithful orthogonal representation of $G$ over $\mathbb{F}$ bounds $\mr(G,\mathbb{F})$ above. In turn, we obtain the bound
\begin{equation}\label{eq:mrlower}
    \mr (G, \mathbb{F}_2) \leq \cb(G).
\end{equation}
In Corollary~\ref{cor:mrcb}, we shall see that, while this bound is not always achieved, $\cb(G)$ and $\mr(G,\mathbb{F}_2)$ differ by no more than 1. Furthermore, we will characterize the graphs $G$ with $\cb(G) = \mr(G,\mathbb{F}_2)+1$ as those whose adjacency matrix is the unique matrix of minimum rank over $\mathbb{F}_2$ which fits $G$.
\par

It is well known that $\mr(G,\mathbb{R})$ is bounded above by the {\em clique covering number} of $G$, $\cc(G)$, or the minimum cardinality of a collection of cliques in $G$ such that every edge of $G$ is in at least one clique~\cite{fallat2007minimum}. Moreover, if every pairwise intersection in a minimal clique covering of $G$ contains at most one vertex, then $\mr(G,\mathbb{F})\leq \cc(G)$ for any field $\mathbb{F}$~\cite{work2008zero}. On the other hand, $\cb(G)$ does not provide a bound for $\mr(G,\mathbb{R})$, significantly differentiating subgraph complementation systems from clique coverings.
\par

The rest of this paper is outlined as follows. In Section~\ref{sec:defnot}, we establish some basic definitions and notation that we will use throughout the paper. In Section~\ref{sec:ubounds}, we elaborate on orthogonal representations of graphs and exhibit a set of upper bounds on $\cb(G)$ for general graphs in terms of their order, size, and vertex cover numbers. In Section~\ref{sec:minrank}, we explore the relationships between $\cb(G)$, $\mr(G,\mathbb{F}_2)$, $\mr(G,\mathbb{R})$, and a new operation: {\em tripartite subgraph complementation}. In Section~\ref{sec:forb}, we show that the graph property $\cb (G)\leq k$ is hereditary and finitely defined, similarly to $\mr (G,\mathbb{F})$ when $\mathbb{F}$ is finite. When $k$ is odd, we show that the sets of forbidden induced subgraphs for $\cb(G)\leq k$ and $\mr(G,\mathbb{F}_2)\leq k$ are the same. We find the minimal forbidden induced subgraphs for the property $\cb (G)\leq 2$.

\section{Definitions and notation}\label{sec:defnot}
All graphs considered in this paper are finite and simple. The vertex set of a graph $G$ is denoted by $V(G)$ and the edge set by $E(G)$, or by $V$ and $E$ respectively when $G$ is evident from context. We denote the number of vertices of $G$ by $|G|$ and the number of edges by $||G||$, or by $n$ and $m$ respectively when $G$ is evident from context. Complete graphs are denoted by $K_n$, paths are denoted by $P_n$, cycles by $C_n$, and wheel graphs by $W_n$, where $n$ indicates the number of vertices in each case. The empty graph $\overline{K}_n$ is the graph complement of $K_n$, and $G$ is called {\em nonempty} if it has at least one edge. The disjoint union of graphs $G$ and $H$ is denoted by $G+H$, and the disjoint union of $k$ copies of $G$ is denoted by $kG$. If $G$ and $H$ are graphs on the same vertex set $V$, the {\em symmetric difference} of $G$ and $H$ is the graph $G\triangle H = (V,E(G)\triangle E(H))$, {\em i.e.} whose edges appear in exactly one of $E(G)$ or $E(H)$. We generalize this definition by considering symmetric differences of graphs $G$ and $H$ on subsets of $V$, defined in the same way. We denote by $N(v)$ the {\em open neighborhood} of a vertex $v$, that is, $N(v) = \{u \mid uv\in E\}$, and by $N[v]$ the {\em closed neighborhood} of $v$, that is, $N(v)\cup \{v\}$. The {\em degree} of $v$ is $|N(v)|$, denoted by $d(v)$. When it is helpful to specify the graph in question, we use the notations $N_G(v)$, $N_G[v]$, and $d_G(v)$, respectively. The induced subgraph of $G$ on the subset of vertices $V\setminus S$ is denoted by $G - S$, and the graph obtained by deleting a vertex $v$ or an edge $e$ is denoted by $G-v$ or $G-e$ respectively. A {\em class} of graphs is a set of graphs closed under isomorphism. A class that is closed under deleting vertices is said to be {\em hereditary}. It is easy to see that a class $X$ is hereditary if and only if there is a set of graphs $M$ such that no graph in $X$ has an induced subgraph in $M$; that is, $X$ may be characterized by its set of {\em minimal forbidden induced subgraphs}.

\section{Orthogonal representations and upper bounds}\label{sec:ubounds}

Alekseev and Lozin studied the minimum dimension of an orthogonal representation of a graph $G$ over a field $\mathbb{F}$ in which the dot product of two vectors representing adjacent vertices is 1~\cite{alekseev2001orthogonal}. In the case that $\mathbb{F}=\mathbb{F}_2$, this is a faithful orthogonal representation of $G$. In keeping with their notation, we let $d(G,\mathbb{F})$ denote the minimum dimension of an orthogonal representation of $G$ over $\mathbb{F}$ such that, for $i\neq j$, $x_i\cdot x_j = 1$ if and only if $ij\in E$. We note that $\cb(G)=d(G,\mathbb{F}_2)$.
\par

We present several upper bounds on the number $\cb(G)$, one in terms of the number of vertices $|G|=n$, one in terms of the number of edges $||G||=m$, and one in terms of the size of a minimum vertex cover $\tau(G)$. Those in terms of $n$ are quoted from \cite{alekseev2001orthogonal}.
\par

\begin{thm}\label{thm:easyvxupp}{\rm \cite{alekseev2001orthogonal}}
For any field $\mathbb{F}$ and any graph $G$ with $n$ vertices,
\[d(G,\mathbb{F})\leq n-1.\]
\end{thm}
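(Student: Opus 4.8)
The plan is to prove the bound by a direct inductive construction of vectors $x_1,\dots,x_n\in\mathbb{F}^{n-1}$ realizing the required dot products, rather than by exhibiting a fitting matrix of small rank. Since $d(G,\mathbb{F})$ imposes no constraint on the self-products $x_i\cdot x_i$, I have complete freedom on the diagonal, and I will exploit this. To make the induction on the number of vertices self-propagating I would strengthen the statement to: \emph{every graph on $k$ vertices admits such a representation whose vectors span the ambient space $\mathbb{F}^{d}$, with $d\le k-1$} (a ``full'' representation, so that $d$ equals the rank of the $x_i$'s). The base case is a single vertex represented by the empty vector in $\mathbb{F}^{0}$. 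The strengthening to full representations is what keeps the dimension from creeping above $n-1$ when a vertex is added; without it, a rank-deficient representation of $G-v$ would force too many extra coordinates.

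For the inductive step, given $G$ on $n$ vertices I would delete any vertex $v$ and invoke the hypothesis to obtain a full representation $\{x_u\}_{u\ne v}\subseteq\mathbb{F}^d$ of $G-v$ with $d\le n-2$. Fix a set $B$ of vertices indexing a basis of $\mathbb{F}^d$ (so $|B|=d$), and let $R$ be the remaining $t=(n-1)-d\ge1$ neighbours-or-not of $v$. I would enlarge the ambient space to $\mathbb{F}^{d}\oplus\mathbb{F}^{t}$, keep each basis vector as $(x_u,0)$, lift the $j$th non-basis vector to $(x_{r_j},e_j)$ using a fresh standard basis vector $e_j$ of $\mathbb{F}^t$, and set $x_v=(w,c)$ with $w\in\mathbb{F}^d$ and $c\in\mathbb{F}^t$ to be determined. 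The constraints $x_v\cdot x_u=[u\sim v]$ for $u\in B$ then become a square linear system $w\cdot x_u=[u\sim v]$ whose coefficient matrix has the basis $\{x_u\}_{u\in B}$ as its rows, hence is invertible with a unique solution $w$; the constraints for $r_j\in R$ are satisfied afterward by simply choosing $c_j=[r_j\sim v]-w\cdot x_{r_j}$.

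The verification splits into three routine checks, in order: that the lifting preserves every old adjacency relation (the fresh coordinates are pairwise orthogonal and at most one lifted vector is nonzero in each new coordinate, so no off-diagonal product changes); that the new dimension is exactly $d+t=n-1$; and that the enlarged family still spans, since the vectors $(x_u,0)$ recover $\mathbb{F}^d\oplus 0$ and the lifted vectors then recover each $0\oplus e_j$. The one point requiring genuine care -- the main obstacle -- is arranging the step so that \emph{appending coordinates does not disturb the already-correct products}; this is precisely why each non-basis vector is lifted into its own fresh coordinate while the basis vectors are left untouched, which is what collapses the system for $w$ to an invertible square system. I would emphasize that this argument is uniform over every field: it never restricts the bilinear form to a subspace and never needs square roots or $\mathrm{char}\,\mathbb{F}\ne2$, so it sidesteps the field-dependent Gram-matrix realizability issues (isotropy over $\mathbb{F}_2$, non-square diagonal entries, and so on) that a Laplacian- or fitting-matrix-based proof of the same bound would have to confront.
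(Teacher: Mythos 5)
Your proof is correct. Note first that the paper itself gives no proof of this statement: Theorem~\ref{thm:easyvxupp} is imported from~\cite{alekseev2001orthogonal}, so there is no in-paper argument to compare against, and your construction must stand on its own --- which it does. The three load-bearing points all check out: (1) since the condition on $d(G,\mathbb{F})$ constrains only off-diagonal products, the perturbation $e_j\cdot e_j=1$ introduced by lifting $x_{r_j}$ to $(x_{r_j},e_j)$ lands harmlessly on the diagonal, while $e_j\cdot e_{j'}=0$ for $j\neq j'$ keeps every old off-diagonal product intact; (2) the spanning strengthening is genuinely necessary and does exactly the work you claim --- it guarantees a $d$-element subset $B$ whose vectors form the rows of an invertible $d\times d$ coefficient matrix, so $w$ exists (and is unique), after which each $c_j=[r_j\sim v]-w\cdot x_{r_j}$ is solved independently because each lifted vector occupies its own fresh coordinate; and (3) the dimension count $d+t=d+\bigl((n-1)-d\bigr)=n-1$ together with the recovery of $0\oplus e_j$ from $(x_{r_j},e_j)$ minus a combination of the $(x_u,0)$ re-establishes fullness, so the induction propagates. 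The base case ($\mathbb{F}^0$ for a single vertex) and the small cases $n=2$ work as stated. Your closing remark about field-uniformity is also well taken and is corroborated by the paper's own later development: over $\mathbb{F}_2$, a proof that first produces a low-rank symmetric fitting matrix and then tries to factor it as $XX^T$ with $\rank(X)$ columns runs into precisely the zero-diagonal obstruction recorded in Lemma~\ref{lem:fried} and Proposition~\ref{prop:strongfried}, whereas your direct vector-by-vector construction never factors a Gram matrix and so is immune to it. Two cosmetic quibbles only: you should say explicitly that $B$ is chosen as a subset of the vertex set whose (possibly repeated) vectors contain a basis --- distinct vertices may share a vector, which is harmless but worth a word --- and you could note that Theorem~\ref{thm:vxuppbound} ($d(P_n,\mathbb{F})=n-1$ in characteristic $2$) shows your bound is tight, so no strengthening of the induction could improve the exponent of the construction.
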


\begin{thm}\label{thm:vxuppbound}{\rm \cite{alekseev2001orthogonal}}
For any field $\mathbb{F}$ of characteristic 2 and any $n$-vertex graph $G$ ($n>2$) other than $P_n$,
\[d(G,\mathbb{F}) \leq n-2.\]
Furthermore, $d(P_n, \mathbb{F}) = n-1$.
\end{thm}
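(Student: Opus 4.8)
The plan is to treat the two assertions separately and to reduce the upper bound to a statement over $\mathbb{F}_2$. For the path the lower bound is the crux. Any symmetric matrix fitting $P_n$ over $\mathbb{F}$ is tridiagonal with nonzero entries on the first sub- and super-diagonals and an arbitrary diagonal; deleting its first row and last column leaves a triangular matrix with nonzero diagonal, hence nonsingular, so every fitting matrix has rank at least $n-1$ and $\mr(P_n,\mathbb{F})=n-1$. Since the Gram matrix $MM^T$ of any faithful orthogonal representation is symmetric, fits $G$, and has rank at most its number of columns, the inequality $\mr(G,\mathbb{F})\le d(G,\mathbb{F})$ holds over every field; combined with Theorem~\ref{thm:easyvxupp} this gives $n-1\le\mr(P_n,\mathbb{F})\le d(P_n,\mathbb{F})\le n-1$, so $d(P_n,\mathbb{F})=n-1$.

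For the upper bound I would first observe that a faithful orthogonal representation over $\mathbb{F}_2$ is also one over any field $\mathbb{F}$ of characteristic $2$ (the same $0/1$ vectors have the same dot products, and $1\neq 0$ in $\mathbb{F}$), so $d(G,\mathbb{F})\le d(G,\mathbb{F}_2)=\cb(G)$. It therefore suffices to prove $\cb(G)\le n-2$ for every $G\neq P_n$, and the general field statement comes for free. If $G$ is disconnected with components $G_1,\dots,G_k$ where $k\ge 2$, placing representations of the $G_i$ in mutually orthogonal coordinate blocks shows that $\cb$ is subadditive over disjoint unions, so by Theorem~\ref{thm:easyvxupp} we get $\cb(G)\le\sum_i(|G_i|-1)=n-k\le n-2$. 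Thus we may assume $G$ is connected and not a path.

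For connected $G\neq P_n$ I would pass through the minimum rank, using the structural fact (valid over any field) that a connected graph satisfies $\mr(G,\mathbb{F}_2)=n-1$ only when $G=P_n$; hence $\mr(G,\mathbb{F}_2)\le n-2$, and we may fix a symmetric matrix $B$ fitting $G$ with $\rank B=\mr(G,\mathbb{F}_2)\le n-2$. The bridge from rank to representation dimension is the factorization behavior of symmetric matrices in characteristic $2$: a symmetric matrix of rank $r$ can be written as $MM^T$ with $r$ columns precisely when it is \emph{non-alternating} (some diagonal entry is nonzero), whereas a nonzero \emph{alternating} matrix (zero diagonal) of rank $r$ requires $r+1$ columns. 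Consequently, if $B$ can be chosen non-alternating we get $\cb(G)\le\rank B\le n-2$, and if $\mr(G,\mathbb{F}_2)\le n-3$ then even an alternating $B$ yields $\cb(G)\le\rank B+1\le n-2$.

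The one remaining, and main, obstacle is the case $\mr(G,\mathbb{F}_2)=n-2$ in which every minimum-rank matrix fitting $G$ is alternating; here I must produce a non-alternating fitting matrix of rank at most $n-2$. Since adding $e_ie_i^T$ to $B$ (flipping the $i$th diagonal entry) preserves fitting and does not raise the rank exactly when $e_i$ lies in the column space of $B$, the goal becomes showing that for a connected non-path $G$ with $\mr(G,\mathbb{F}_2)=n-2$ some standard basis vector lies in the column space of a minimum-rank fitting matrix. I expect to establish this by a structural argument: handle cycles $C_n$ by an explicit representation in dimension $n-2$ (for instance, the five vectors $100,111,010,011,101$ realize $C_5$ in $\mathbb{F}_2^3$), and for graphs containing a vertex of degree at least $3$ exhibit a rank-$(n-2)$ fitting matrix with a nonzero diagonal entry directly, arguing that failure of this condition would force $G$ to be a path. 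This last step is where the real work lies; the remainder is bookkeeping around the factorization dichotomy and the reductions above.
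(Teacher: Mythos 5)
This theorem is one the paper quotes from Alekseev and Lozin~\cite{alekseev2001orthogonal} without reproving, so there is no in-paper proof to compare against and your attempt has to stand on its own. Much of your scaffolding is correct: the tridiagonal argument (delete the first row and last column of any symmetric matrix fitting $P_n$ to get a triangular matrix with the nonzero sub-diagonal entries on its diagonal) does give $\mr(P_n,\mathbb{F})\geq n-1$ over every field, and together with $\mr(G,\mathbb{F})\leq d(G,\mathbb{F})$ and Theorem~\ref{thm:easyvxupp} this settles $d(P_n,\mathbb{F})=n-1$; the reduction $d(G,\mathbb{F})\leq d(G,\mathbb{F}_2)=\cb(G)$ is valid since $0/1$ vectors have the same pairwise dot products over any field of characteristic $2$; the disconnected case via subadditivity is fine; and the factorization dichotomy you invoke (a symmetric rank-$r$ matrix over $\mathbb{F}_2$ factors as $MM^T$ with $r$ columns if and only if it is non-alternating, and an alternating one needs $r+1$ columns) is exactly Lemma~\ref{lem:fried} and Proposition~\ref{prop:strongfried}, due to Friedland~\cite{friedland2012minimum}, and does not depend on the present theorem, so there is no circularity there.

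There are, however, two genuine gaps. First, the ``structural fact'' that a connected graph with $\mr(G,\mathbb{F}_2)=n-1$ must be $P_n$ is asserted with no proof or citation; it is true, but it is not in this paper (for trees it follows from the field-independence result of~\cite{chenette2007minimum} used in Theorem~\ref{cor:mrcbtree}; the general connected case is a nontrivial extension of Fiedler's tridiagonal theorem to arbitrary fields) and its difficulty is comparable to the statement you are proving. Second, and decisively, the case you defer --- connected non-path $G$ with $\mr(G,\mathbb{F}_2)=n-2$ all of whose minimum-rank fitting matrices are alternating --- is not a boundary case to be mopped up: after your reductions it \emph{is} the theorem. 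By the paper's own Theorem~\ref{thm:mrneqcbsummary}, that case occurs precisely when $\cb(G)=\mr(G,\mathbb{F}_2)+1=n-1$, so what remains to show is exactly that no connected non-path graph attains $\cb(G)=n-1$; and graphs whose unique minimum-rank fitting matrix is alternating do exist ($W_5$ and $K_{3,3}$, both with $\mr=2$), so the case cannot be waved off as vacuous --- it must be ruled out specifically at rank $n-2$. Your plan for it (``exhibit a rank-$(n-2)$ fitting matrix with a nonzero diagonal entry, arguing that failure would force $G$ to be a path'') restates the goal rather than proving it, as you acknowledge. Note that the original argument of~\cite{alekseev2001orthogonal} is a direct induction constructing the representation itself (in the spirit of the paper's cycle example, where complementing $N[v]$ reduces $C_n$ to $C_{n-1}$ plus an isolated vertex), which bypasses the minimum-rank detour and hence never meets this obstruction.
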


It follows that $\cb(G) \leq n-1$ for all graphs $G$, and that equality holds only in the case that $G$ is a path on $n$ vertices. Similarly, as we will see in Proposition~\ref{prop:linearforest}, if $G$ is a {\em linear forest}, or a graph for which every component is a path, then $\cb(G) = m$; otherwise, $\cb(G) \leq m-1$.
\par

\begin{thm}\label{thm:edgeuppbound}
For any graph $G$ with $m$ edges which is not a linear forest,  \[\cb(G)\leq m-1.\]
\end{thm}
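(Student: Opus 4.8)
The plan is to produce an explicit subgraph complementation system of the required size by starting from the trivial one and performing a single local modification. Begin with $\mathscr{C}_0 = \{\,\{u,v\} : uv\in E\,\}$, the collection of the $m$ edges of $G$ regarded as cliques of size two; its symmetric difference of edge sets is exactly $E$, so $\cb(G)\le m$. It then suffices to replace a few of these two-element sets by strictly fewer sets without changing the overall symmetric difference. Since $G$ is not a linear forest, it either has a vertex of degree at least $3$ or contains a cycle, and I would treat these two cases separately.

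Suppose first that some vertex $v$ has three neighbours $a,b,c$. I would delete the three sets $\{v,a\}$, $\{v,b\}$, $\{v,c\}$ from $\mathscr{C}_0$ and insert the two sets $\{v,a,b,c\}$ and $\{a,b,c\}$, leaving every other member untouched. The key computation is that the symmetric difference of the edge sets of the cliques on $\{v,a,b,c\}$ and on $\{a,b,c\}$ is precisely $\{va,vb,vc\}$: each of the pairs $ab,ac,bc$ lies in both cliques and so cancels, while each of $va,vb,vc$ lies in the first clique only. Hence the modified collection still has symmetric difference $E$, and it has $m-3+2 = m-1$ sets.

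Now suppose instead that $G$ contains a cycle on the distinct vertices $v_1,v_2,\ldots,v_k$ (in cyclic order) with $k\ge 3$. Here I would delete from $\mathscr{C}_0$ the $k$ edges of this cycle and insert the $k-2$ triangles $T_i = \{v_1,v_i,v_{i+1}\}$ for $i=2,\ldots,k-1$. A telescoping cancellation shows that the symmetric difference of the edge sets $T_2\triangle T_3\triangle\cdots\triangle T_{k-1}$ equals the edge set of the cycle: each chord $v_1v_i$ with $3\le i\le k-1$ appears in exactly the two triangles $T_{i-1}$ and $T_i$ and so cancels, while each cycle edge appears exactly once (for $k=3$ this degenerates to the single triangle $T_2$, whose edge set is the triangle itself). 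The resulting collection has symmetric difference $E$ and $m-k+(k-2)=m-2\le m-1$ sets.

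In both cases the essential point---and the only place where care is needed---is that the newly introduced larger cliques may contain pairs $ab,ac,bc$ or chords $v_1v_i$ that happen to be edges of $G$; the argument nevertheless goes through because each such pair appears an even number of times among the inserted sets and therefore contributes nothing to the symmetric difference, so its adjacency status is still correctly recorded by the untouched two-element sets of $\mathscr{C}_0$. Confirming these cancellations is the main (though routine) obstacle; once it is checked, we obtain a subgraph complementation system of size at most $m-1$, and hence $\cb(G)\le m-1$.
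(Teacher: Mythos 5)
Your proof is correct, and it overlaps with the paper's argument in one case while genuinely diverging in the other. For a vertex $v$ of degree at least $3$, the paper uses the same cancellation idea as you, but globally: it takes the two nested cliques $N(v)$ and $N[v]$, whose symmetric difference is the whole star at $v$, then adds the remaining $m-d(v)$ edges as pairs, giving $m-d(v)+2\leq m-1$; your pair $\{v,a,b,c\}$, $\{a,b,c\}$ is exactly the restriction of this trick to three neighbours, so the two arguments coincide in substance there (the paper's version is marginally more economical when $d(v)>3$). The real difference is the cycle case: the paper observes that a graph of maximum degree $2$ that is not a linear forest is not $P_n$ and invokes the bound $\cb(G)\leq n-2$ of Theorem~\ref{thm:vxuppbound}, whereas you triangulate the cycle by a fan of $k-2$ triangles $T_i=\{v_1,v_i,v_{i+1}\}$ and verify the telescoping cancellation directly, obtaining $m-2$ sets. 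Your route is longer but self-contained, and it is in fact tighter: when $G$ has maximum degree $2$, $m=n-p$ where $p$ is the number of path components, so the bound $n-2$ yields $m-1$ only when $p\leq 1$ (for instance, $C_3+2K_2$ has $n-2=5$ but $m-1=4$), an edge case that the paper's one-line appeal glosses over and that would need repairing, say by handling components separately; your fan construction instead works uniformly for any graph containing a cycle, with no assumption on the maximum degree. Your closing verification --- that chords $v_1v_i$, and pairs among $\{a,b,c\}$, occur an even number of times among the inserted cliques and hence leave the symmetric difference unchanged, their adjacency being recorded by the untouched two-element sets --- is precisely the point that needs checking, and your check is sound.
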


\begin{proof}
Suppose that a graph $G$ which is not a linear forest has a vertex $v$ of degree $d(v)>2$. The collection $\{N(v), N[v]\}$ is a subgraph complementation system for the induced subgraph $G[N[v]]$. The remaining $m-d(v)$ edges of $G$ may then be added one at a time to obtain a subgraph complementation system for $G$ of cardinality $m-d(v)+2 \leq m-1$.
\par

Otherwise, $G$ has maximum degree 2. Then $G$ consists of disjoint cycles and paths. Since $G$ is not a linear forest by assumption, it must contain a cycle. Theorem~\ref{thm:vxuppbound} completes the proof.
\end{proof}

\begin{ex}
From Theorem~\ref{thm:vxuppbound} we can deduce the exact subgraph complementation number for cycles. If we were to have $c_2(C_n) \leq n-3$, then we would have $c_2(P_n)\leq n-2$, since $C_n$ and $P_n$ differ in exactly one edge. Therefore, $\cb(C_n)=n-2$.
\par

We can also see that $\cb(C_n)\leq n-2$ by induction. Suppose that $n>3$, and let $v\in V(C_n)$. A subgraph complementation of $C_n$ with respect to $N[v]$ results in an $(n-1)$-cycle and an isolated vertex, which has subgraph complementation number at most $n-3$ by the inductive hypothesis. If $\mathscr{C}$ is a minimum subgraph complementation system for $C_{n-1}$, then $\mathscr{C}\cup \{N[v]\}$ is a subgraph complementation system for $C_n$ of cardinality at most $n-2$.
\end{ex}

We let $\tau(G)$ denote the minimum cardinality of a {\em vertex cover} of $G$, or a set of vertices such that every edge of $G$ is incident to at least one vertex in the set.
\par

\begin{thm}\label{thm:vxcoverupp}
For any graph $G$, \[\cb(G)\leq 2\tau(G).\]
\end{thm}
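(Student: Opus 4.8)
```latex
The plan is to construct an explicit subgraph complementation system for $G$ of cardinality at most $2\tau(G)$, building it from a minimum vertex cover. Let $S = \{v_1, v_2, \ldots, v_\tau\}$ be a minimum vertex cover of $G$, so that $\tau = \tau(G)$ and every edge of $G$ has at least one endpoint in $S$. The idea is to account for the edges at each cover vertex $v_i$ using the neighborhood trick already exploited in the proof of Theorem~\ref{thm:edgeuppbound}: the pair $\{N(v_i), N[v_i]\}$ forms a subgraph complementation system for the star-like induced subgraph $G[N[v_i]]$, contributing exactly the edges incident to $v_i$. Taking the union of these pairs over all $i$ gives a collection of $2\tau$ sets, and the main work is to verify that its associated symmetric difference of cliques is exactly $E(G)$.

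First I would set up the parity bookkeeping. For each $v_i \in S$, the two sets $N(v_i)$ and $N[v_i]$ differ only in whether $v_i$ itself is included, so a pair of vertices $u, w$ lies together in $\{N(v_i), N[v_i]\}$ an odd number of times precisely when exactly one of $u, w$ equals $v_i$ and the other is a neighbor of $v_i$; that is, this pair toggles exactly the edges of $G$ incident to $v_i$. Summing modulo $2$ over all $i$, an edge $uw$ with both endpoints outside $S$ is never toggled, which is consistent since $S$ is a vertex cover and no such edge exists. An edge with exactly one endpoint $v_i$ in $S$ is toggled exactly once. The delicate case is an edge $uw$ with both endpoints in $S$: it gets toggled once by the pair for $u$ and once by the pair for $v_i=w$, for a total of two, which is even. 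This is the main obstacle, and it means the naive collection fails to produce edges internal to the cover.

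To repair this I would handle the subgraph $G[S]$ induced on the cover separately, noting that $G[S]$ has at most $\tau$ vertices, so by Theorem~\ref{thm:easyvxupp} it admits a subgraph complementation system of cardinality at most $\tau - 1$. One clean way to organize the argument is to process the cover vertices in order: when treating $v_i$, use the pair $\{N(v_i) \setminus \{v_1,\ldots,v_{i-1}\}, N[v_i]\setminus\{v_1,\ldots,v_{i-1}\}\}$, so that each edge internal to $S$ is toggled exactly once by the smaller-indexed endpoint and never revisited. With this refinement, each of the $\tau$ cover vertices still contributes a pair of sets, every edge incident to $v_i$ but not to any earlier cover vertex is toggled exactly once, and each edge with both endpoints in $S$ is toggled exactly once by its lower-indexed endpoint. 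One then checks that edges with exactly one endpoint in $S$ and edges with both endpoints in $S$ are each toggled an odd number of times, while non-edges are toggled an even number of times.

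Assembling these pieces, the resulting collection has cardinality $2\tau$, giving $\cb(G) \leq 2\tau(G)$. I expect the bulk of the remaining care to be in the parity verification for the ordered-neighborhood construction, ensuring that removing the earlier cover vertices from each neighborhood set does not accidentally alter the parity of any pair not incident to $v_i$; since both sets in the $i$th pair have the same vertices removed, any such spurious pair lies in both or neither, contributing an even count and leaving all other edges and non-edges unaffected.
```
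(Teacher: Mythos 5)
Your proposal is correct and, in its final form, is exactly the paper's proof: processing the cover vertices in order and complementing on $N(v_i)\setminus\{v_1,\ldots,v_{i-1}\}$ and $N[v_i]\setminus\{v_1,\ldots,v_{i-1}\}$ is precisely the construction given there, and your parity verification (each cover-internal edge toggled only by its lower-indexed endpoint, all other pairs contributing evenly) correctly fills in what the paper leaves implicit. The brief detour suggesting you handle $G[S]$ separately via Theorem~\ref{thm:easyvxupp} would have overshot the bound (giving roughly $3\tau-1$ sets), but since you abandoned it for the ordered construction, the argument stands as written.
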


\begin{proof}
Let $U = \{u_1, \ldots, u_\tau\}\subset V$ be a minimum vertex cover of $G$. Successive subgraph complementations of $\overline{K}_n$ on the sets $N(u_1)$ and $N[u_1]$ yeild each edge incident to $u_1$ in $G$. Some of the edges incident to $u_1$ may also be incident to $u_2$. Thus, in order to obtain the remaining edges incident to $u_2$, we subgraph complement with respect to the sets $N(u_2)\setminus \{u_1\}$ and $N[u_2]\setminus \{u_1\}$. For each $u_i \in U$, we subgraph complement with respect to $N[u_i]\setminus \{u_1, \ldots, u_{i-1}\}$ and $N(u_i)\setminus \{u_1, \ldots, u_{i-1}\}$ to obtain the edges incident to $u_i$ which have not already been built. Since every edge of $G$ is incident to some vertex in $U$ by definition of a vertex cover, and since at most two sets were needed to obtain the edges incident to each vertex in the cover, we have $\cb(G)\leq 2\tau(G)$.
\end{proof}

We remark that $\cb(G)<2\tau(G)$ if any of the sets $N(u_i) \setminus \{u_1,\ldots,u_{i-1}\}$ ($1\leq i\leq \tau$) in the proof of Theorem~\ref{thm:vxcoverupp} are singletons. By reordering, we see that the inequality is strict if there is a minimum vertex cover $U$ of $G$ containing a vertex with only one neighbor outside of $U$.
\par

\section{Minimum rank}\label{sec:minrank}
The problem of finding the minimum cardinality of a subgraph complementation system of a graph relates closely to the minimum rank problem over $\mathbb{F}_2$, and in some cases to the minimum rank problem over $\mathbb{R}$ and other fields. This section explores the nature of these relationships. Unless otherwise specified, when we discuss the rank of a matrix in this section, we mean the rank over $\mathbb{F}_2$.
\par

\subsection{General graphs}\label{sec:mrgeneralgraphs}
We begin by examining the relationship between $\cb(G)$ and $\mr (G, \mathbb{F}_2)$ for general graphs. In Corollary~\ref{cor:mrcb}, we show that $\mr(G,\mathbb{F}_2)\leq \cb(G)\leq \mr(G,\mathbb{F}_2)+1$. In Theorem~\ref{thm:mrneqcbsummary}, we provide a characterization of the graphs for which $\cb(G)=\mr(G,\mathbb{F}_2)+1$. 
\par

\begin{ex}\label{ex:cbneqmr}
Recall, from inequality~\eqref{eq:mrlower} of the introduction, that the subgraph complementation number of a graph $G$ is at least its minimum rank over $\mathbb{F}_2$. This inequality is sharp; we will see in Theorem~\ref{cor:mrcbtree} that equality holds for forests. On the other hand, there are graphs which do not attain equality in \eqref{eq:mrlower}. Consider $K_{3,3}$, the complete bipartite graph with partite sets of order 3. The adjacency matrix of $K_{3,3}$ has only two distinct rows, which are linearly independent over any field, implying that $\mr(K_{3,3}, \mathbb{F}_2) = 2$. However, $c_2(K_{3,3}) > 2$, as we will show in Theorem~\ref{thm:forbind2}; if $A$ and $B$ are the partite sets of $K_{3,3}$, then $\mathscr{C} = \{A,B,A\cup B\}$ is a subgraph complementation system of minimum cardinality.
\end{ex}

It is natural to ask how much larger the subgraph complementation number of a graph might be than its minimum rank over $\mathbb{F}_2$. We will show that, in general, $\cb(G)\leq \mr(G,\mathbb{F}_2)+1$ and detail the cases in which $\cb(G)=\mr(G,\mathbb{F}_2)+1$. The following example, along with Lemmas~\ref{lem:evensymdif} and~\ref{lem:fried}, will be useful. It will also be of use to consider the number of sets in a subgraph complementation system $\mathscr{C}$ of $G$ which contain a given vertex $v$, which we refer to as the number of times that $v$ {\em appears} in $\mathscr{C}$.
\par

\begin{ex}
It is well known that the minimum rank of a graph $G$ over a field $\mathbb{F}$ is additive in the sense that, if $G$ has components $G_1, G_2, \ldots, G_t$, then $\mr(G,\mathbb{F}) = \sum_1^t \mr(G_i,\mathbb{F})$~\cite{fallat2007minimum}. Perhaps surprisingly, the subgraph complementation number behaves differently. The smallest counterexample is given by the graph $G = W_5+K_2$. Trivially, $\cb(K_2) = 1$, and Figure~\ref{im:w5} depicts a subgraph complementation system $\mathscr{C}$ for $W_5$ of cardinality 3, which is optimal by Theorem~\ref{thm:forbind2}. Since the class of graphs with subgraph complementation number at most $k$ is hereditary, we have $\cb(G) \geq 3$, and one might expect that $\cb(G)=3+1=4$, which is achieved by taking the union of the subgraph complementation systems for the components of $G$. However, since every vertex of $W_5$ appears an even number of times in $\mathscr{C}$, we can add the endpoints of the isolated edge in $G$ to each set in $\mathscr{C}$ to obtain a subgraph complementation system for $G$ of cardinality 3. That is, $\cb(G) = 3$.
\end{ex}

\begin{figure}
 \centering
 \scalebox{1.7}{
\begin{tikzpicture}
[node/.style={circle, draw=black!100, fill=black!100, inner sep=0pt, minimum size = 5pt}]

\node[node] (1) at (.5,.5) {};
\node[node] (2) at (0,0) {};
\node[node] (3) at (1,0) {};
\node[node] (4) at (1,1) {};
\node[node] (5) at (0,1) {};

\draw[black!20,line width=4pt] (2) -- (3) -- (4) -- (5) -- (2);
\draw[black!20,line width=4pt] (2) edge[bend right] (4);
\draw[black!20,line width=4pt] (3) edge[bend left] (5);

\draw[black!100,thick] (2) -- (3) -- (4) -- (5) -- (2);
\draw[black!100,thick] (2) edge[bend right] (4);
\draw[black!100,thick] (3) edge[bend left] (5);
\filldraw[color=black!100, fill=white!100,thick] (1) circle (.09);
\end{tikzpicture}
\hspace{2em}
\begin{tikzpicture}
[node/.style={circle, draw=black!100, fill=black!100, inner sep=0pt, minimum size = 5pt}]

\node[node] (1) at (.5,.5) {};
\node[node] (2) at (0,0) {};
\node[node] (3) at (1,0) {};
\node[node] (4) at (1,1) {};
\node[node] (5) at (0,1) {};

\draw[black!20,line width=4pt] (2) -- (1) -- (4);
\draw[black!20,line width=4pt] (2) edge[bend right] (4);

\draw[black!100,thick] (2) -- (3) -- (4) -- (5) -- (2);
\draw[black!100,thick] (2) -- (1) -- (4);

\filldraw[color=black!100, fill=white!100,thick] (3) circle (.09);
\filldraw[color=black!100, fill=white!100,thick] (5) circle (.09);

\end{tikzpicture}
\hspace{2em}
\begin{tikzpicture}
[node/.style={circle, draw=black!100, fill=black!100, inner sep=0pt, minimum size = 5pt}]

\node[node] (1) at (.5,.5) {};
\node[node] (2) at (0,0) {};
\node[node] (3) at (1,0) {};
\node[node] (4) at (1,1) {};
\node[node] (5) at (0,1) {};

\draw[black!20,line width=4pt] (3) -- (1) -- (5);
\draw[black!20,line width=4pt] (3) edge[bend left] (5);

\draw[black!100,thick] (2) -- (5);
\draw[black!100,thick] (2) -- (3);
\draw[black!100,thick] (5) -- (4);
\draw[black!100,thick] (3) -- (4);
\draw[black!100,thick] (1) -- (4);
\draw[black!100,thick] (1) -- (3);
\draw[black!100,thick] (1) -- (5);
\draw[black!100,thick] (1) -- (2);

\filldraw[color=black!100, fill=white!100,thick] (2) circle (.09);
\filldraw[color=black!100, fill=white!100,thick] (4) circle (.09);
\end{tikzpicture}
}
\caption{A subgraph complementation system for $W_5$.}
\label{im:w5}
\end{figure}
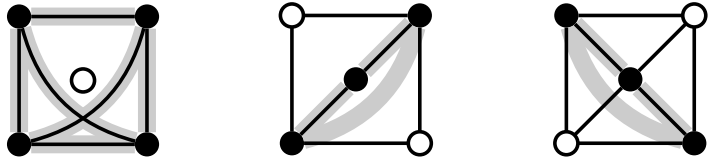

\begin{lemma}\label{lem:evensymdif}
Let $G$ be a graph, and let $v \in V(G)$. If $\mathscr{C}$ is a subgraph complementation system for $G$ in which every vertex in $V(G)\setminus \{v\}$ appears an even number of times, then the collection $\mathscr{C}_v$, which consists of the symmetric differences of $\{v\}$ with each set in $\mathscr{C}$, is also a subgraph complementation system for $G$.
\end{lemma}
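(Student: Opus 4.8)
The plan is to work entirely through the pairwise characterization of subgraph complementation systems given in the introduction: a collection is a subgraph complementation system for $G$ exactly when every pair of adjacent vertices is contained in an odd number of its sets and every pair of nonadjacent vertices in an even number. Since $\mathscr{C}_v$ arises from $\mathscr{C}$ by replacing each set $C$ with $C \triangle \{v\}$, I would track, pair by pair, how the number of sets containing that pair changes, and check that the correct parity survives.

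First I would dispose of every pair $\{x,y\}$ with $v \notin \{x,y\}$. For such a pair, whether both $x$ and $y$ lie in $C$ is unaffected by passing to $C \triangle \{v\}$, because $v$ equals neither $x$ nor $y$. Hence the number of sets containing $\{x,y\}$ is identical in $\mathscr{C}$ and in $\mathscr{C}_v$, and the required parity carries over with nothing to prove.

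The crux is the pairs of the form $\{v,y\}$ with $y \neq v$. Here I would note that $C \triangle \{v\}$ contains $v$ precisely when $C$ does not, while its membership of $y$ agrees with that of $C$. Thus the number of sets of $\mathscr{C}_v$ containing $\{v,y\}$ equals the number of sets $C \in \mathscr{C}$ with $y \in C$ and $v \notin C$. Writing $a$ for the number of sets of $\mathscr{C}$ containing both $v$ and $y$, and $b$ for the number containing $y$ but not $v$, the quantity I must control in $\mathscr{C}_v$ is $b$, whereas the hypothesis that $\mathscr{C}$ is a subgraph complementation system controls $a$ (odd iff $vy \in E$). The key step is the observation that $a + b$ is exactly the number of times $y$ appears in $\mathscr{C}$, which is even by hypothesis; therefore $a$ and $b$ share parity, so $b$ is odd iff $a$ is odd iff $vy \in E$, as desired.

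I expect the only delicate point—rather than a genuine obstacle—to be keeping the bookkeeping clean and pinpointing where the evenness hypothesis is used: it enters exactly once, to force $a$ and $b$ to have the same parity for each $y \neq v$. No further case analysis is needed, and since the argument never touches diagonal entries, the conclusion that $\mathscr{C}_v$ is a subgraph complementation system for $G$ follows directly.
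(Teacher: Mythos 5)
Your proposal is correct and follows essentially the same route as the paper's proof: both arguments note that pairs avoiding $v$ are counted identically in $\mathscr{C}$ and $\mathscr{C}_v$, and both handle pairs $\{v,y\}$ by observing that the evenness of $y$'s total appearance count forces the number of sets containing $y$ with $v$ and the number containing $y$ without $v$ to share parity. Your explicit bookkeeping with $a$ and $b$ simply makes precise the parity swap that the paper states in prose.
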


\begin{proof}
Let $G$ be a graph, let $v \in V(G)$, and let $\mathscr{C}$ be a subgraph complementation system for $G$ in which every vertex in $V(G) \setminus \{v\}$ appears an even number of times in $\mathscr{C}$. Let $\mathscr{C}_v$ denote the collection of symmetric differences of $\{v\}$ with each set in $\mathscr{C}$, {\em i.e.}, $\mathscr{C}_v = \{C\triangle \{v\}: C\in \mathscr{C}\}$. For any $u \in V(G) \setminus \{v\}$, if $u$ and $v$ are contained in an odd number of sets together in $\mathscr{C}$, then $u$ is contained in an odd number of sets without $v$ in $\mathscr{C}$, so $u$ and $v$ appear together an odd number of times in $\mathscr{C}_v$. Similarly, if $u$ and $v$ appear together an even number of times in $\mathscr{C}$, then $u$ appears an even number of times without $v$ in $\mathscr{C}$, and thus an even number of times with $v$ in $\mathscr{C}_v$. Also, any two vertices which are distinct from $v$ appear together the same number of times in $\mathscr{C}_v$ as in $\mathscr{C}$. In other words, $\mathscr{C}_v$ is also a subgraph complementation system for $G$, as desired.
\end{proof}

In a particular case of Lemma~\ref{lem:evensymdif}, if every vertex of $G$ appears an even number of times in a subgraph complementation system $\mathscr{C}$, then for any $v \in V(G)$, the collection $\mathscr{C}_v$ is also a subgraph complementation system for $G$.

\begin{lemma}\label{lem:fried}{\rm \cite{friedland2012minimum}}
Let $A$ be an $n\times n$ symmetric matrix over $\mathbb{F}_2$ of rank $k$. Then either 
\begin{itemize}
 \item[i.] $A = XX^T$, where $X$ is an $n\times k$ matrix over $\mathbb{F}_2$ of rank $k$; or
 \item[ii.] $A = X (\oplus_1^l H_2) X^T$, where $X$ is as in i., $H_2 = \left(\begin{smallmatrix} 0&1 \\ 1&0 \end{smallmatrix}\right)$, and $k=2l$, so that the rank of $A$ is even.
\end{itemize}
\end{lemma}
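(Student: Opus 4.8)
The plan is to read the statement as a classification of symmetric bilinear forms over $\mathbb{F}_2$ and to produce the factorization by a symmetric elimination. Viewing $A$ as the Gram matrix of the form $\langle x,y\rangle = x^{T}Ay$ on $\mathbb{F}_2^{\,n}$, the two alternatives correspond to whether this form is non-alternating or alternating, and the crucial observation is that over $\mathbb{F}_2$ the form is alternating (that is, $\langle x,x\rangle = 0$ for all $x$) precisely when $A$ has an all-zero diagonal, since $x^{T}Ax = \sum_i A_{ii}x_i$ in characteristic $2$. Moreover this property is invariant under congruence $A\mapsto QAQ^{T}$, so it is a genuine obstruction separating the two cases: $I_k$ is non-alternating while $\oplus_1^l H_2$ is alternating, and hence the two normal forms can never be congruent.

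First I would reduce $A$ to a nonsingular core. The goal is an invertible $Q$ with $QAQ^{T} = D\oplus 0$, where $D$ is a nonsingular $k\times k$ symmetric matrix, so that, writing $X$ for the first $k$ columns of $Q^{-1}$, we get $A = XDX^{T}$ with $X$ of rank $k$. I would build $Q$ by symmetric Gaussian elimination: if some diagonal entry $A_{ii}$ equals $1$, use it as a pivot to clear its row and column by simultaneous row and column operations, splitting off a $1\times 1$ block $\langle 1\rangle$ and recursing on the complementary submatrix; if every diagonal entry is $0$ but some off-diagonal $A_{ij}=1$, the principal $2\times 2$ submatrix on $\{i,j\}$ equals $H_2$, which is nonsingular, and I would use it as a $2\times2$ pivot to split off an $H_2$ block and recurse. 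Iterating yields $D$ as a direct sum of blocks each equal to $\langle 1\rangle$ or $H_2$, of total size $k = \rank A$.

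To finish, I distinguish the two cases by the diagonal of $A$. If $A$ has an all-zero diagonal then the form is alternating, every elimination step falls into the $2\times2$ case (the zero-diagonal property being preserved by congruence throughout), so $D = \oplus_1^l H_2$, forcing $k = 2l$ and giving alternative ii. If instead some diagonal entry of $A$ is $1$, then at least one $\langle 1\rangle$ block appears, and I would remove every $H_2$ block by repeatedly applying the congruence $\langle 1\rangle \oplus H_2 \cong I_3$ over $\mathbb{F}_2$, a short explicit change of basis which trades a unit block and a hyperbolic block for identity blocks; absorbing all hyperbolic blocks leaves $D$ congruent to $I_k$. Writing $D = Q'{Q'}^{T}$ for the corresponding congruence then gives $A = (XQ')(XQ')^{T}$ with $XQ'$ of rank $k$, which is alternative i.

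The main obstacle I anticipate is the elimination step, because over $\mathbb{F}_2$ the usual argument for characteristic $\neq 2$ fails: a rank-$k$ symmetric matrix need not possess a nonsingular $k\times k$ principal submatrix (for example $H_2$ itself), so a single sequence of diagonal pivots is not available. Handling this requires the $2\times2$ hyperbolic pivots above and a careful verification that they preserve symmetry and the zero-diagonal dichotomy. The remaining technical point is the absorption identity $\langle 1\rangle\oplus H_2\cong I_3$, which is what forces non-alternating forms to normalize to $I_k$ rather than to a mixture of unit and hyperbolic blocks; once this is in hand, the case analysis closes cleanly.
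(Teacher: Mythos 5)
Your argument is correct, but there is nothing in the paper to compare it against: the paper does not prove this lemma at all. It is quoted from~\cite{friedland2012minimum}, and the surrounding text notes it is a special case of Theorem~2.6 of~\cite{friedland1991quadratic}; the paper only extracts, without proof, the refinement that case~{\em i.}\ occurs exactly when some diagonal entry is nonzero (Proposition~\ref{prop:strongfried}). What you have written is a self-contained derivation via the classical classification of symmetric bilinear forms over $\mathbb{F}_2$, and each step checks out: symmetric block elimination with $1\times 1$ pivots $\langle 1\rangle$ and hyperbolic $2\times 2$ pivots $H_2$ reduces $A$ by congruence to $D\oplus 0$ with $D$ nonsingular of order $k=\rank A$; the alternating/non-alternating dichotomy is detected by the diagonal (since $x^TAx=\sum_i A_{ii}x_i$ over $\mathbb{F}_2$) and is congruence-invariant, so an all-zero diagonal forces every pivot to be hyperbolic, giving $D=\oplus_1^l H_2$ and $k=2l$; the absorption identity $\langle 1\rangle\oplus H_2\cong I_3$ is a genuine congruence over $\mathbb{F}_2$ (e.g.\ the basis $e_1+e_3$, $e_2+e_3$, $e_1+e_2+e_3$ is orthonormal for the form $H_2\oplus\langle1\rangle$), and your pivoting order guarantees at least one $\langle1\rangle$ block whenever some $A_{ii}=1$, so the absorption can always be seeded and $D\cong I_k$ follows. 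Passing from $QAQ^T=D\oplus 0$ to $A=XDX^T$ with $X$ the first $k$ columns of $Q^{-1}$, of rank $k$, is also right. In effect you prove the stronger dichotomy of Proposition~\ref{prop:strongfried}, not merely the ``either/or'' of the lemma, which is a bonus rather than a flaw.

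One slip is worth correcting, though it sits in a motivational remark and does not affect the proof. You claim that a rank-$k$ symmetric matrix over $\mathbb{F}_2$ need not have a nonsingular $k\times k$ principal submatrix, ``for example $H_2$ itself''---but $H_2$ has rank $2$ and is itself a nonsingular $2\times 2$ principal submatrix, so it is not an example of this. What $H_2$ actually witnesses is that a nonzero symmetric matrix over $\mathbb{F}_2$ can have identically zero diagonal, so that no $1\times 1$ pivot is available and purely diagonal pivoting fails; that is precisely the obstruction your hyperbolic pivots repair, so the proof itself is unharmed. Relatedly, the identity $x^TAx=\sum_i A_{ii}x_i$ is special to $\mathbb{F}_2$ (it uses $x_i^2=x_i$); over a general field of characteristic $2$ one only gets $\sum_i A_{ii}x_i^2$, so ``in characteristic 2'' should read ``over $\mathbb{F}_2$''---harmless here, since the lemma is stated over $\mathbb{F}_2$.
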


Each of these cases can be interpreted combinatorially.
Let $G$ be the graph which $A$ fits. Case~{\it i.} of Lemma~\ref{lem:fried} will be helpful in establishing a close relationship between $c_2(G)$ and $\mr(G,\mathbb{F}_2)$ (see Corollary~\ref{cor:mrcb}). Case~{\it ii.}, while not related to subgraph complementation, can be interpreted in graph theoretic terms by a collection of $l$ complete tripartite graphs, or tricliques, on subsets of $V$ whose symmetric difference of edge sets is $E$. In Theorem~\ref{thm:mrcbt2}, we show that $\mr(G,\mathbb{F}_2)$ is either $c_2(G)$ or twice the minimum cardinality of such a collection of tricliques.  

Lemma~\ref{lem:fried} is a special case of Theorem~2.6 in~\cite{friedland1991quadratic}. It follows from the proof of this theorem that an $n\times n$ symmetric matrix $A$ decomposes as in case~{\em i.}\:if some diagonal entry is nonzero, and as in case~{\em ii.}\:if every diagonal entry is zero. There is a converse to this statement which will be of use to us.

\begin{prop}\label{prop:strongfried}
An $n \times n$ symmetric matrix $A = (a_{i,j})$ over $\mathbb{F}_2$ of rank $k$ decomposes as in case~{\it i.}\:of Lemma~\ref{lem:fried} if and only if $a_{i,i} = 1$ for some $i \in [n]$, and as in case~{\it ii.}\:if and only if $a_{i,i} = 0$ for all $i \in [n]$.
\end{prop}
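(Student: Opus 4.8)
The plan is to establish the two biconditionals by proving the two implications that start from a given decomposition, namely ``case~{\it ii.} $\Rightarrow$ all diagonal entries vanish'' and ``case~{\it i.} $\Rightarrow$ some diagonal entry is nonzero,'' and then to combine these with the dichotomy of Lemma~\ref{lem:fried} to recover all four directions at once. Throughout I would assume $A\neq 0$, equivalently $k\geq 1$, since the zero matrix is a degenerate object that vacuously fits both forms (with $X$ having no columns) and is excluded from the intended applications, where $A$ fits a nonempty graph.

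I would dispose of case~{\it ii.} first, as it is a one-line computation. If $A = X(\oplus_1^l H_2)X^T$ and I write the $i$th row of $X$ in blocks as $(y_1,z_1,\ldots,y_l,z_l)$, then the quadratic form of $\oplus_1^l H_2$ evaluated on this row gives $a_{i,i} = \sum_{j=1}^l 2y_jz_j$, which is $0$ over $\mathbb{F}_2$; hence every diagonal entry vanishes. The harder and central step is case~{\it i.}, and I expect it to be the main obstacle. Here I would use that $x^2=x$ over $\mathbb{F}_2$, so that for $A=XX^T$ the diagonal entry $a_{i,i}=\sum_j x_{i,j}^2 = \sum_j x_{i,j}$ is exactly the parity of the $i$th row of $X$; equivalently, the vector of diagonal entries equals $X\mathbf{1}$, with $\mathbf{1}$ the all-ones vector of $\mathbb{F}_2^k$. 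If all diagonal entries were $0$, then $X\mathbf{1}=\mathbf{0}$, so $\mathbf{1}$ would lie in the kernel of the map $v\mapsto Xv$. But $X$ has rank $k$ and exactly $k$ columns, so this map is injective and its kernel is trivial, forcing $\mathbf{1}=\mathbf{0}$ and hence $k=0$, contrary to $k\geq 1$. The delicate point, and the reason this is the crux, is that the conclusion genuinely requires the \emph{full column rank} hypothesis on $X$: without it one can have $A=XX^T$ with zero diagonal, for example $H_2=XX^T$ for the $2\times 3$ matrix $X$ with rows $110$ and $101$, which has a nontrivial kernel containing $\mathbf{1}$.

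Finally I would assemble the biconditionals from these two implications. The conditions ``$a_{i,i}=1$ for some $i$'' and ``$a_{i,i}=0$ for all $i$'' are mutually exclusive and exhaustive, and Lemma~\ref{lem:fried} guarantees that every symmetric $A$ of rank $k$ admits a case~{\it i.} or case~{\it ii.} decomposition. So if some $a_{i,i}=1$, then $A$ cannot be of case~{\it ii.} (which would force all diagonal entries to $0$), hence it is of case~{\it i.}; and if all $a_{i,i}=0$, then $A$ cannot be of case~{\it i.} (which would force some diagonal entry to $1$), hence it is of case~{\it ii.} Together with the two implications proved above, this yields both biconditionals, and in fact re-derives the forward implications quoted from the proof of Friedland's theorem, making the argument self-contained.
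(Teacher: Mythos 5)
Your proof is correct and follows essentially the same route as the paper's: the same parity computation $a_{i,i} = \sum_{j} 2x_{i,2j-1}x_{i,2j} \equiv 0 \pmod 2$ disposes of case~{\it ii.}, and your kernel argument for case~{\it i.}\ (an all-zero diagonal gives $X\mathbf{1}=\mathbf{0}$, contradicting injectivity of a rank-$k$ matrix with $k$ columns) is just a linear-algebraic rephrasing of the paper's observation that even row weights make the columns of $X$ linearly dependent, forcing $\rank(X)$ below its number of columns; both proofs then assemble the biconditionals from the dichotomy of Lemma~\ref{lem:fried}. Your explicit exclusion of the degenerate case $k=0$, where the zero matrix vacuously admits both empty decompositions, is a small point of care that the paper leaves implicit.
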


\begin{proof}
Let $A$ be as described, and suppose that $A = X \left(\oplus_1^l \left(\begin{smallmatrix} 0&1 \\ 1&0 \end{smallmatrix}\right)\right) X^T$ for some $n\times 2l$ matrix $X = (x_{i,j})$. Then, for each $i \in [n]$, $a_{i,i} = 2x_{i,1}x_{i,2} + 2x_{i,3}x_{i,4} + \cdots + 2x_{i,2l-1}x_{i,2l} \equiv 0 \pmod{2}$. That is, every diagonal entry of $A$ is zero. On the other hand, suppose that $A = XX^T$. A diagonal entry $a_{i,i}$ is $0$ if and only if the $i$th row of $X$ has an even number of $1$'s. Thus, if every $a_{i,i} = 0$, the columns of $X$ are linearly dependent over $\mathbb{F}_2$, so that $\rank(X)$, which is at least $k$ since $\rank(X) \geq \rank(XX^T)$, is strictly less than the number of columns of $X$. Thus, $A$ decomposes as in case~{\em i.} of Lemma~\ref{lem:fried} if and only if some diagonal entry of $A$ is nonzero.
\end{proof}

Let $A$ be a symmetric $n\times n$ matrix, and let $G$ be the graph which $A$ fits. If $A = XX^T$, with $X$ as in case~{\em i.} of Lemma~\ref{lem:fried}, then the rows of $X$ constitute a faithful orthogonal representation of $G$ over $\mathbb{F}_2$. As we saw in the introduction, we can interpret the rows of $X$ as incidence vectors for a subgraph complementation system $\{C_1, C_2, \ldots, C_k\}$ for $G$; for $i = 1, 2, \ldots, k$, include a vertex $v$ in $C_i$ if and only if the $i$th entry of the row associated to $v$ in $X$ is 1.

\begin{thm}\label{thm:ourfried}
Let $A$ be an $n\times n$ symmetric matrix over $\mathbb{F}_2$ of rank $k$. Then either
\begin{enumerate}[i.]
    \item $A = XX^T$, where $X$ is an $n \times k$ matrix over $\mathbb{F}_2$ of rank $k$; or
    \item $A = XX^T$, where $X$ is an $n \times (k+1)$ matrix over $\mathbb{F}_2$ of rank $k$, and $k$ is even.
\end{enumerate}
\end{thm}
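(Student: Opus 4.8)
The plan is to read Theorem~\ref{thm:ourfried} off of Lemma~\ref{lem:fried}, which already partitions symmetric matrices over $\mathbb{F}_2$ into two cases according to whether some diagonal entry is nonzero (Proposition~\ref{prop:strongfried}). In case~{\it i.} of that lemma, $A = XX^T$ with $X$ an $n\times k$ matrix of rank $k$, which is verbatim case~{\it i.} of the theorem, so nothing remains to be done. All of the work lies in case~{\it ii.}, where $A = X\left(\oplus_1^l H_2\right)X^T$ with $X$ an $n\times 2l$ matrix of rank $k = 2l$; here I must rewrite $A$ as $YY^T$ while spending only a single extra column.

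The key reduction is the observation that it suffices to factor the fixed form $D := \oplus_1^l H_2$ as $D = CC^T$ for some $2l\times(2l+1)$ matrix $C$ of rank $2l$. Granting such a $C$, I set $Y := XC$, which is $n\times(2l+1) = n\times(k+1)$ and satisfies $YY^T = XCC^TX^T = XDX^T = A$. The rank of $Y$ is then pinned down by the sandwich $k = \rank(YY^T) \le \rank(Y) \le \min\{\rank(X),\rank(C)\} = k$, giving case~{\it ii.} of the theorem, with $k=2l$ even as inherited from Lemma~\ref{lem:fried}.

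It remains to produce $C$, that is, $2l$ vectors in $\mathbb{F}_2^{2l+1}$ whose Gram matrix under the standard dot product is exactly $D$. The obstacle here is subtle: the obvious idea of factoring each $H_2$ block separately costs one extra coordinate per block and overshoots the column budget. Indeed $H_2 = CC^T$ already needs the $2\times 3$ matrix with rows $(1,1,0)$ and $(0,1,1)$, and naively block-summing $l$ of these yields a $2l\times 3l$ matrix, far too wide. To add only one column in total I would instead work inside the even-weight subspace $W = \mathbf{1}^{\perp}\subseteq\mathbb{F}_2^{2l+1}$. Since $2l+1$ is odd we have $\mathbf{1}\cdot\mathbf{1}=1$, so $\mathbf{1}\notin W$; hence $W^{\perp}=\langle\mathbf{1}\rangle$ meets $W$ trivially and the dot product restricts to a nondegenerate form on the $2l$-dimensional space $W$. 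Every vector of $W$ is self-orthogonal, so this restricted form is alternating, and therefore, by the standard classification of nondegenerate alternating bilinear forms over $\mathbb{F}_2$, it admits a symplectic basis $v_1,\dots,v_{2l}$ whose Gram matrix is precisely $\oplus_1^l H_2 = D$. Taking $C$ to be the matrix with rows $v_1,\dots,v_{2l}$, which has rank $2l$ as a basis of $W$, completes the construction. The main point to get right is this final step: the single spare coordinate is exactly what forces the restriction to $W$ to be nondegenerate, and it is the structure theory for alternating forms, rather than any block-by-block recipe, that keeps the number of columns equal to $k+1$.
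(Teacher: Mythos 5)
Your proof is correct, but it takes a genuinely different route from the paper. The paper never factors the form $\oplus_1^l H_2$ directly; instead it argues combinatorially through the graph $G$ that $A$ fits: it perturbs one diagonal entry of $A$ from $0$ to $1$ to obtain a matrix $B$ with a nonzero diagonal entry, applies Proposition~\ref{prop:strongfried} to write $B = YY^T$, and then repairs the perturbation at the level of subgraph complementation systems, taking symmetric differences of the sets with $\{v\}$ (Lemma~\ref{lem:evensymdif}) or appending the singleton $\{v\}$, splitting into the subcases $\rank(B) = k+1$ and $\rank(B) \leq k$ and pinning down the rank via the even-row-weight/linear-dependence argument you also use. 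Your argument is purely linear-algebraic and arguably cleaner: reducing to a single universal factorization $\oplus_1^l H_2 = CC^T$ with $C$ of size $2l \times (2l+1)$ and rank $2l$, and producing $C$ as a symplectic basis of the even-weight hyperplane $\mathbf{1}^{\perp} \subseteq \mathbb{F}_2^{2l+1}$, is exactly right --- your nondegeneracy check ($\mathbf{1}\cdot\mathbf{1}=1$ since $2l+1$ is odd, so $\mathbf{1}^{\perp} \cap \langle \mathbf{1}\rangle = 0$) and your appeal to the classification of nondegenerate alternating forms (noting that in characteristic $2$ ``zero diagonal'' is what alternating means) are both sound, as is the rank sandwich $k = \rank(YY^T) \leq \rank(Y) \leq \min\{\rank(X), \rank(C)\} = k$. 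What each approach buys: yours is self-contained, avoids any graph-theoretic scaffolding, produces one fixed matrix $C$ independent of $A$, and correctly identifies that the single spare coordinate is precisely what makes the restricted form nondegenerate (the naive block-by-block factorization of each $H_2$ would indeed cost $3l$ columns); the paper's proof, by contrast, deliberately routes through subgraph complementation systems because the intermediate objects --- minimum systems in which every vertex appears an even number of times, and Lemma~\ref{lem:evensymdif} in particular --- are reused immediately afterward in Theorems~\ref{thm:mrzerodiag} and~\ref{thm:mrneqcbevenvxs}, so the perturbation argument doubles as groundwork for the characterization of graphs with $\cb(G) = \mr(G,\mathbb{F}_2)+1$.
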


\begin{proof}
Let $A$ be as described, and let $G$ be the graph which $A$ fits. Suppose that we are in case~{\em ii.} of Lemma~\ref{lem:fried}. Then $k$ is even, and, by Proposition~\ref{prop:strongfried}, every diagonal entry of $A$ is $0$. Let $B$ be the matrix obtained by changing a single diagonal entry of $A$ from $0$ to $1$. Then $B$ also fits $G$. It is not hard to see that the ranks of $A$ and $B$ differ by no more than 1. Furthermore, since $B$ has a nonzero diagonal entry, $B = YY^T$ for some $n \times \rank(B)$ matrix $Y$ by Proposition~\ref{prop:strongfried}. Let $\mathscr{C}$ be the subgraph complementation system associated to $Y$. 

Suppose that $\rank(B) = k+1$. Let $v$ be the vertex corresponding to the row in which $B$ has a nonzero diagonal entry. By Lemma~\ref{lem:evensymdif}, the collection $\mathscr{C}_v$ consisting of the symmetric differences of $\{v\}$ with every set in $\mathscr{C}$ is a subgraph complementation system for $G$. Since $|\mathscr{C}|$ is odd, every vertex of $G$ appears an even number of times in $\mathscr{C}_v$. Thus, if $Z$ is the matrix associated to $\mathscr{C}_v$, then $Z$ is an $n\times (k+1)$ matrix such that $ZZ^T = A$. Each row of $Z$ contains an even number of $1$'s, so the columns of $Z$ are linearly dependent, and $\rank(Z) \leq k$. Equality follows, as $\rank(Z) \geq \rank(A) = k$. Otherwise, $\rank(B) \leq k$. In this case, the collection $\mathscr{C} \cup \{\{v\}\}$ is a subgraph complementation system for $G$ of cardinality at most $k+1$ in which every vertex appears an even number of times. If $M$ is the matrix associated to $\mathscr{C} \cup \{\{v\}\}$, then $MM^T = A$. Since $\rank(M) \geq \rank(A)$, and since the columns of $M$ are linearly dependent, $M$ must have $k+1$ columns and rank $k$. This completes the proof.
\end{proof}

Suppose that $A$ is a matrix which fits a graph $G$ of rank $k = \mr(G,\mathbb{F}_2)$. Then the subgraph complementation system associated to the matrix $X$ obtained in Theorem~\ref{thm:ourfried} has cardinality either $k$ or $k+1$, depending on whether we are in case~{\em i.} or case~{\em ii.}. This implies a close relationship between $\mr(G,\mathbb{F}_2)$ and $\cb(G)$. 
\par

\begin{cor}\label{cor:mrcb}
Let $G$ be a graph. Then either 
\begin{itemize}
 \item[i.] $\cb(G) = \mr(G,\mathbb{F}_2)$, or 
 \item[ii.] $\cb(G) = \mr(G,\mathbb{F}_2)+1$, in which case $\mr(G,\mathbb{F}_2)$ is even.
\end{itemize}
\end{cor}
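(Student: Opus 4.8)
The plan is to read the dichotomy essentially straight off Theorem~\ref{thm:ourfried}, so the bulk of the work is already done; what remains is a short bookkeeping argument plus one parity observation. First I would set $k = \mr(G,\mathbb{F}_2)$ and fix a symmetric matrix $A$ over $\mathbb{F}_2$ of rank $k$ that fits $G$ (one exists by the definition of minimum rank). Applying Theorem~\ref{thm:ourfried} to $A$ produces a factorization $A = XX^T$ with $\rank X = k$, in which $X$ has either $k$ columns (its case~{\em i.}) or $k+1$ columns with $k$ even (its case~{\em ii.}).

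Next I would translate this factorization back into the language of subgraph complementation. Because $A$ fits $G$ and $A = XX^T$, the off-diagonal zeros of $XX^T$ coincide exactly with those of the adjacency matrix of $G$, so the rows of $X$ form a faithful orthogonal representation of $G$ over $\mathbb{F}_2$; reading these rows as incidence vectors (as described in the introduction) yields a subgraph complementation system for $G$ of cardinality equal to the number of columns of $X$. This immediately gives $\cb(G)\leq k+1$, and combining with the lower bound~\eqref{eq:mrlower} we obtain $k\leq \cb(G)\leq k+1$, so that $\cb(G)\in\{k,k+1\}$.

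The one genuine point — and the step I expect to be the only nonroutine part — is to show that $\cb(G)=k+1$ forces $k$ to be even; I would argue this by contrapositive. The crucial feature of Theorem~\ref{thm:ourfried} is that its case~{\em ii.} can arise only when $k$ is even, so whenever $k$ is odd the factorization must be of type~{\em i.}, producing a system of cardinality exactly $k$ and hence $\cb(G)=k$. Equivalently, whenever $\cb(G)$ strictly exceeds $\mr(G,\mathbb{F}_2)$, the rank $k$ must be even, which is precisely case~{\em ii.} of the corollary. The subtlety to watch here is that the corollary is a statement about $G$, not about the particular matrix $A$ chosen: a priori, different minimum-rank matrices fitting $G$ could land in different cases of Theorem~\ref{thm:ourfried}. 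The contrapositive formulation sidesteps this worry cleanly, since it uses only the one-directional implication ``case~{\em ii.} $\Rightarrow$ $k$ even'' and never needs $G$ to select a case canonically.
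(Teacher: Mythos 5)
Your proposal is correct and follows essentially the same route as the paper, which derives the corollary directly from Theorem~\ref{thm:ourfried} by reading the columns of the factorization $A = XX^T$ (for a fixed minimum-rank matrix $A$ fitting $G$) as a subgraph complementation system of cardinality $k$ or $k+1$, with case~\emph{ii.} forcing $k$ even; your contrapositive handling of the parity step is a careful restatement of exactly this. (The paper additionally records a second, independent proof via the additivity of minimum rank applied to $G + K_2$, but your argument matches its primary derivation.)
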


There is a simpler proof of Corollary~\ref{cor:mrcb} which compares the additivity of the minimum rank function over $\mathbb{F}_2$ to the subadditivity of the subgraph complementation number. Let $k = \mr(G,\mathbb{F}_2)$, and suppose that $\cb(G)\neq k$, so that $\mr(G,\mathbb{F}_2)<\cb(G)$. Then $k$ is even by Lemma~\ref{lem:fried}, otherwise there would exist an $n \times k$ matrix $X$ over $\mathbb{F}_2$ such that $XX^T$ has rank $k$ and fits $G$, and the associated subgraph complementation system for $G$ would have cardinality $k$. Consider $G+K_2$. By the additivity of the minimum rank of a graph, $\mr(G+K_2,\mathbb{F}_2)=\mr(G,\mathbb{F}_2)+1$, which is odd. Thus,
\[\cb(G)\leq \cb(G+K_2)=\mr(G+K_2,\mathbb{F}_2)=\mr(G,\mathbb{F}_2)+1,\]
as desired.

We proceed to characterize the graphs for which $\cb(G)=\mr(G,\mathbb{F}_2)+1$. A characterization of the adjacency matrices of such graphs follows directly from Theorem~\ref{thm:ourfried}.
\par

\begin{thm}\label{thm:mrzerodiag}
Let $G$ be a nonempty graph of minimum rank $k$ over $\mathbb{F}_2$. Then $\cb(G) \neq k$ if and only if the adjacency matrix of $G$ has rank $k$, and every other matrix which fits $G$ over $\mathbb{F}_2$ has rank strictly larger than $k$.
\end{thm}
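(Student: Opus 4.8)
The plan is to exploit a feature special to the field $\mathbb{F}_2$: since its only nonzero scalar is $1$, a symmetric matrix $B$ fits $G$ over $\mathbb{F}_2$ precisely when it agrees with the adjacency matrix $A$ of $G$ in every off-diagonal entry. Hence the matrices fitting $G$ over $\mathbb{F}_2$ are exactly those of the form $A + D$ with $D$ a diagonal matrix, and—because $A$ has zero diagonal—$A$ is the \emph{unique} fitting matrix whose diagonal is entirely zero, while every other fitting matrix has a nonzero diagonal entry. This reframing lets me replace the somewhat awkward right-hand side of the theorem by the cleaner statement that no matrix fitting $G$ \emph{other than} $A$ has rank $k$.

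With this reduction, I would prove the single equivalence
\[\cb(G)=k \iff \text{some matrix fitting } G \text{ other than } A \text{ has rank } k,\]
from which the theorem follows by negating and tidying up. For the backward direction, a fitting matrix $B\neq A$ has a nonzero diagonal entry, so Proposition~\ref{prop:strongfried} gives $B=XX^T$ with $X$ an $n\times k$ matrix of rank $k$; the subgraph complementation system associated to $X$ has cardinality $k$, and together with the bound $\mr(G,\mathbb{F}_2)\le\cb(G)$ from~\eqref{eq:mrlower} this yields $\cb(G)=k$. For the forward direction, a minimum system of cardinality $\cb(G)=k$ produces an $n\times k$ matrix $X$ with $B=XX^T$ fitting $G$; since $\rank(B)\le k$ and $\rank(B)\ge\mr(G,\mathbb{F}_2)=k$, we get $\rank(B)=k$, and it remains to argue $B\neq A$.

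That last claim is the crux and the step I expect to be the main obstacle: showing a minimum system attaining $\cb(G)=k$ cannot have every vertex appearing an even number of times. I would handle it by a rank count. If every vertex appeared an even number of times, every row of $X$ would have even weight, so the all-ones vector would lie in the kernel of $X$; as $G$ is nonempty we have $k\ge 1$ and this vector is nonzero, forcing $\rank(X)<k$. But $\rank(X)\ge\rank(XX^T)=\rank(B)=k$, a contradiction. Hence $B$ has a nonzero diagonal entry, so $B\neq A$, completing the forward direction.

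Finally I would close the loop by negation. The equivalence says $\cb(G)\neq k$ holds if and only if no fitting matrix other than $A$ has rank $k$, that is, if and only if every such matrix has rank strictly greater than $k$. Since $\mr(G,\mathbb{F}_2)=k$ guarantees at least one fitting matrix of rank $k$, that matrix must then be $A$ itself, so $\rank(A)=k$; this recovers exactly the two conditions stated on the right-hand side. Most of the machinery is already in place—the decomposition dictionary of Theorem~\ref{thm:ourfried} and Proposition~\ref{prop:strongfried}, and the system/matrix correspondence—so the only genuinely new work is the rank count above and the bookkeeping of the negation.
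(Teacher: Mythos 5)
Your proposal is correct and takes essentially the same route as the paper: both arguments hinge on Proposition~\ref{prop:strongfried}'s dictionary between a nonzero diagonal entry and an $n\times k$ Gram decomposition $B = XX^T$, combined with the matrix/system correspondence and the bound~\eqref{eq:mrlower}. Your ``crux'' parity/kernel count (all rows of even weight force the columns of $X$ to be linearly dependent) is precisely the argument in the paper's proof of Proposition~\ref{prop:strongfried}, and your opening observation that over $\mathbb{F}_2$ the matrices fitting $G$ are exactly $A+D$ with $D$ diagonal is the same reduction the paper leaves implicit.
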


\begin{proof}
Let $G$ be a nonempty graph of minimum rank $k$ over $\mathbb{F}_2$, and let $A$ be a matrix which fits $G$ over $\mathbb{F}_2$ of rank $k$. By Theorem~\ref{thm:ourfried}, $A = XX^T$ for some $n\times k$ or $n\times (k+1)$ matrix $X$. By Proposition~\ref{prop:strongfried}, there exists such an $n\times k$ matrix $X$ if and only if some diagonal entry of $A$ is nonzero. Every such matrix $X$ corresponds to a subgraph complementation system for $G$ whose cardinality is the number of columns of $X$, from which we obtain the desired result.
\end{proof}

We will now characterize the subgraph complementation systems of graphs for which $\cb(G) = \mr(G,\mathbb{F}_2) + 1$. We start with the following lemma.

\begin{lemma}\label{lem:evencboddvertex}
Let $G$ be a graph with $\cb (G)$ even, and let $\mathscr{C}$ be a minimum subgraph complementation system for $G$. Then there exists a vertex $v \in V$ such that $v$ appears in $\mathscr{C}$ an odd number of times.
\end{lemma}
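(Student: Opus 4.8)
The plan is to argue by contradiction: suppose that every vertex of $G$ appears an even number of times in $\mathscr{C}$, and derive a parity clash with Corollary~\ref{cor:mrcb}. Write $k = \cb(G)$, which is even by hypothesis, and let $M = M(\mathscr{C})$ be the associated $n \times k$ matrix over $\mathbb{F}_2$ whose $(v,i)$ entry is $1$ precisely when $v \in C_i$.

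First I would translate the parity assumption into a statement about $M$. The number of times a vertex $v$ appears in $\mathscr{C}$ is exactly the number of $1$'s in the row of $M$ indexed by $v$, so the assumption says that every row of $M$ has an even number of $1$'s. Over $\mathbb{F}_2$, this means that the sum of all $k$ columns of $M$ is the zero vector; hence the columns are linearly dependent and $\rank(M) \le k - 1$.

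Next, recall from the introduction that $A := MM^T \pmod{2}$ fits $G$, since $\mathscr{C}$ is a subgraph complementation system for $G$. Because $\rank(A) \le \rank(M) \le k-1$, we obtain a matrix fitting $G$ of rank at most $k-1$, and hence $\mr(G,\mathbb{F}_2) \le k - 1 = \cb(G) - 1$. In particular $\mr(G,\mathbb{F}_2) < \cb(G)$, so Corollary~\ref{cor:mrcb} forces us into case~{\em ii.}: $\cb(G) = \mr(G,\mathbb{F}_2) + 1$ with $\mr(G,\mathbb{F}_2)$ even. But then $\mr(G,\mathbb{F}_2) = \cb(G) - 1 = k - 1$, which is odd because $k$ is even, contradicting that $\mr(G,\mathbb{F}_2)$ is even. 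This contradiction shows that some vertex must appear an odd number of times in $\mathscr{C}$.

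The argument is short, and the only genuinely delicate point is the parity bookkeeping at the end: the entire contradiction hinges on the hypothesis that $\cb(G)$ is even, which is exactly what makes $k-1$ odd and collides with the evenness of $\mr(G,\mathbb{F}_2)$ guaranteed by Corollary~\ref{cor:mrcb}. I do not anticipate a substantive obstacle beyond verifying that $MM^T$ indeed fits $G$ (immediate from the definition of a subgraph complementation system) and that the column-dependence bound $\rank(M) \le k-1$ is read off correctly from the even row sums.
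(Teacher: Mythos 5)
Your proof is correct, and it is non-circular, but it takes a genuinely different route from the paper. The paper argues purely combinatorially: assuming every vertex appears an even number of times, it picks a set $C=\{u_1,\dots,u_s\}\in\mathscr{C}$ and repeatedly applies Lemma~\ref{lem:evensymdif}, passing from $\mathscr{C}$ to $\mathscr{C}_{u_1}$, then to $(\mathscr{C}_{u_1})_{u_2}$, and so on; the all-even property survives each step precisely because $|\mathscr{C}|=\cb(G)$ is even (the new appearance count of $u_i$ is $|\mathscr{C}|$ minus its old count, so evenness of $\cb(G)$ is used here, quietly, rather than in a final parity clash), and after $s$ steps the set $C$ has been emptied, so it can be discarded, contradicting minimality. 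You instead translate the hypothesis into linear algebra: even row sums of $M=M(\mathscr{C})$ make the columns of $M$ sum to zero, so $\mr(G,\mathbb{F}_2)\leq\rank(MM^T)\leq\rank(M)\leq k-1$, and Corollary~\ref{cor:mrcb} then forces $\cb(G)=\mr(G,\mathbb{F}_2)+1$ with $\mr(G,\mathbb{F}_2)$ even, while your bound pins $\mr(G,\mathbb{F}_2)=k-1$, which is odd. Your rank computation is essentially the sufficiency direction of the paper's proof of Theorem~\ref{thm:mrneqcbevenvxs}, so your argument amounts to deducing the lemma from that computation plus Corollary~\ref{cor:mrcb}; this is legitimate, since Corollary~\ref{cor:mrcb} rests on Theorem~\ref{thm:ourfried}, whose proof uses only Lemmas~\ref{lem:evensymdif} and~\ref{lem:fried} and Proposition~\ref{prop:strongfried} and not the present lemma. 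What each approach buys: yours is shorter once the Friedland-type machinery is in place and makes the parity obstruction explicit; the paper's is elementary and self-contained modulo Lemma~\ref{lem:evensymdif}, and its iterated symmetric-difference technique is independently useful. One shared caveat, not a gap relative to the paper: both proofs implicitly assume $\mathscr{C}\neq\emptyset$ (equivalently, $G$ nonempty), since for $\cb(G)=0$ the statement fails outright; your step $\rank(M)\leq k-1$ needs at least one column, just as the paper's proof needs a set $C$ to pick.
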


\begin{proof}
Let $G$ and $\mathscr{C}$ be as described. Suppose, for the sake of contradiction, that every vertex of $G$ appears an even number of times in $\mathscr{C}$. Let $C=\{ u_1,\dots,u_s \}$ be a set in $\mathscr{C}$. Then $\mathscr{C}_{u_1}$ is a minimum subgraph complementation system for $G$, by Lemma~\ref{lem:evensymdif}. Furthermore, $\mathscr{C}_{u_1}$ maintains the property that every vertex appears an even number of times. We can continue this process to find that $\mathscr{C}_{u_1,u_2}=(\mathscr{C}_{u_1})_{u_2}$ also maintains that property, and so on. Then $\mathscr{C}_{u_1,\dots,u_s}$ is a minimum subgraph complementation system for $G$, but it contains the empty set $C$. This implies that $\mathscr{C}_{u_1,\dots,u_s}\setminus \{C\}$ is also a subgraph complementation system for $G$, which contradicts the minimality of $\mathscr{C}$.
\end{proof}

If $\mathscr{C}$ is a subgraph complementation system of odd cardinatlity in which every vertex appears an even number of times, then the vertex $v$ appears an odd number of times in the subgraph complementation system $\mathscr{C}_v$ from Lemma~\ref{lem:evensymdif}. Together with Lemma~\ref{lem:evencboddvertex}, we see that, for any graph $G$, there exists a minimum subgraph complementation system in which some vertex appears an odd number of times. We will show that this is the case for every minimum subgraph complementation system if and only if $\cb(G)=\mr(G,\mathbb{F}_2)$.
\par

\begin{thm}\label{thm:mrneqcbevenvxs}
Let $G$ be a nonempty graph. Then $\cb(G) \neq \mr(G,\mathbb{F}_2)$ if and only if there exists a minimum subgraph complementation system $\mathscr{C}$ for $G$ in which every vertex of $G$ appears an even number of times.
\end{thm}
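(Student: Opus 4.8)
The plan is to prove both implications directly, using the structural results of Theorem~\ref{thm:ourfried}, Proposition~\ref{prop:strongfried}, and Theorem~\ref{thm:mrzerodiag}, together with the basic observation from the introduction that for any subgraph complementation system $\mathscr{C}$ the matrix $M = M(\mathscr{C})$ satisfies that $MM^T$ fits $G$. First I would record the parity bookkeeping: by Corollary~\ref{cor:mrcb}, the condition $\cb(G) \neq \mr(G,\mathbb{F}_2)$ is equivalent to $\cb(G) = \mr(G,\mathbb{F}_2) + 1$ with $k := \mr(G,\mathbb{F}_2)$ even, so throughout the interesting regime we have $\cb(G) = k+1$.

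For the reverse implication I would argue by a short rank bound. Suppose $\mathscr{C}$ is a minimum subgraph complementation system for $G$ in which every vertex appears an even number of times, and let $M = M(\mathscr{C})$, an $n \times \cb(G)$ matrix over $\mathbb{F}_2$. Each row of $M$ then contains an even number of $1$'s, so the sum of all columns of $M$ is the zero vector; the columns are linearly dependent and $\rank(M) \leq \cb(G) - 1$. Since $MM^T$ fits $G$, we obtain $\mr(G,\mathbb{F}_2) \leq \rank(MM^T) \leq \rank(M) \leq \cb(G) - 1 < \cb(G)$, whence $\cb(G) \neq \mr(G,\mathbb{F}_2)$. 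This direction uses only the minimality of $\mathscr{C}$ and the inequality $\rank(MM^T) \leq \rank(M)$, and does not require the parity remark above.

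For the forward implication I would produce an explicit minimum system with the desired property directly from the adjacency matrix. Assume $\cb(G) \neq \mr(G,\mathbb{F}_2)$ and set $k = \mr(G,\mathbb{F}_2)$. By Theorem~\ref{thm:mrzerodiag} the adjacency matrix $A$ of $G$ has rank $k$; since $A$ has zero diagonal, Proposition~\ref{prop:strongfried} places $A$ in case~{\em ii.}\:of Lemma~\ref{lem:fried}, so $k$ is even, and Theorem~\ref{thm:ourfried} yields an $n \times (k+1)$ matrix $X$ of rank $k$ with $XX^T = A$. The subgraph complementation system $\mathscr{C}$ associated to $X$ has cardinality $k+1 = \cb(G)$, hence is minimum. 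Finally, for each vertex $v$ the corresponding diagonal entry of $XX^T = A$ is $0$, and this entry equals the number of $1$'s in that row of $X$ modulo $2$; thus every row of $X$ has an even number of $1$'s, i.e.\ every vertex appears an even number of times in $\mathscr{C}$, as required.

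I expect no single step to be a serious obstacle, since the heavy lifting is done by Theorems~\ref{thm:ourfried} and~\ref{thm:mrzerodiag}. The point requiring care is that the statement asserts only the \emph{existence} of a minimum system with all even appearances, not that every minimum system has this property: when $\cb(G) = k+1$ one can also have minimum systems in which some vertex appears an odd number of times, corresponding to fitting matrices $MM^T$ of rank $k+1$ rather than to the adjacency matrix itself. The argument above sidesteps this by constructing the even system directly from the adjacency matrix via the matrix $X$ of Theorem~\ref{thm:ourfried}.
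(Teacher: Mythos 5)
Your proposal is correct and takes essentially the same route as the paper's own proof: the reverse direction is the identical rank bound $\mr(G,\mathbb{F}_2) \leq \rank(MM^T) \leq \rank(M) < \cb(G)$ from the column dependence forced by even row weights, and the forward direction likewise combines Corollary~\ref{cor:mrcb}, Theorem~\ref{thm:mrzerodiag}, Proposition~\ref{prop:strongfried}, and Theorem~\ref{thm:ourfried} to write $A(G) = XX^T$ with $X$ an $n\times(k+1)$ matrix whose rows all have even weight, yielding the desired minimum system. Your closing caveat about minimum systems with odd-appearance vertices is accurate as well (the paper makes the same observation via Lemma~\ref{lem:evensymdif} after Lemma~\ref{lem:evencboddvertex}), and your construction correctly avoids needing it.
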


\begin{proof}
Let $G$ be a graph with at least one edge, and let $k = \mr(G,\mathbb{F}_2) > 0$. We begin by proving sufficiency. Supppose that there exists a minimum subgraph complementation system $\mathscr{C}$ of $G$ in which every vertex appears an even number of times. Let $X$ be the matrix associated to $\mathscr{C}$. Each row of $X$ contains an even number of $1$'s, so the columns of $X$ are linearly dependent. Thus, \[k \leq \rank(XX^T) \leq \rank(X) < c_2(G).\]
\par

Concerning the necessary condition, suppose that $\cb(G) \neq k$. Then $\cb(G) = k+1$ by Corollary~\ref{cor:mrcb}. By Theorem~\ref{thm:mrzerodiag}, the adjacency matrix of $G$, $A = A(G)$, is the unique matrix which fits $G$ of minimum rank over $\mathbb{F}_2$. By Proposition~\ref{prop:strongfried} and Theorem~\ref{thm:ourfried}, $A = XX^T$ for some $n \times (k+1)$ matrix $X$ of rank $k$ over $\mathbb{F}_2$. Then every row of $X$ has an even number of $1$'s. Taking the rows of $X$ as incidence vectors, we obtain a subgraph complementation system for $G$ in which every vertex appears an even number of times, as desired. 
\end{proof}

\begin{thm}\label{thm:mrneqcbcomponents}
Let $G$ be a nonempty graph with components $G_1, \ldots, G_t$. Then $\cb(G) \neq \mr(G,\mathbb{F}_2)$ if and only if $\cb(G_i) \neq \mr(G_i,\mathbb{F}_2)$ for all $i \in [t]$.
\end{thm}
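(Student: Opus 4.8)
The plan is to argue entirely at the level of matrices, using the fact that over $\mathbb{F}_2$ the matrices fitting a graph are highly constrained. First I would record the structural observation that underlies everything: if $N$ is any symmetric matrix fitting $G$ over $\mathbb{F}_2$, then for $u \neq v$ its $(u,v)$ entry is forced to be $1$ when $uv \in E$ and $0$ otherwise, so $N$ agrees with the adjacency matrix $A = A(G)$ off the diagonal and the only freedom is the choice of diagonal entries. Since vertices in distinct components are nonadjacent, every such $N$ is block diagonal along the components $G_1,\dots,G_t$, with the $i$th block $N_i$ fitting $G_i$; consequently $\rank(N) = \sum_i \rank(N_i)$. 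Combined with the additivity $\mr(G,\mathbb{F}_2) = \sum_i \mr(G_i,\mathbb{F}_2)$, this says that the minimum-rank matrices fitting $G$ are exactly the block sums of minimum-rank matrices fitting the components.

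Next I would restate Theorem~\ref{thm:mrzerodiag} in the form I will actually use: for a nonempty graph $H$, $\cb(H) = \mr(H,\mathbb{F}_2)$ if and only if some matrix fitting $H$ of rank $\mr(H,\mathbb{F}_2)$ has a nonzero diagonal entry (equivalently, $\cb(H) \neq \mr(H,\mathbb{F}_2)$ exactly when $A(H)$, whose diagonal is zero, is the unique such matrix). With this in hand, the forward implication (if every $G_i$ is ``bad'', then $G$ is ``bad'') is immediate: each $A(G_i)$ then has rank $k_i := \mr(G_i,\mathbb{F}_2)$, so $A(G) = \bigoplus_i A(G_i)$ has rank $\sum_i k_i = \mr(G,\mathbb{F}_2) =: k$; and if $N$ is any fitting matrix with $N \neq A(G)$, it differs from $A(G)$ on the diagonal of some block $j$, whence $\rank(N_j) > k_j$ because $G_j$ is bad, while $\rank(N_i) \ge k_i$ for every $i$, giving $\rank(N) > k$. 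Thus $A(G)$ is the unique minimum-rank fitting matrix and $\cb(G) \neq \mr(G,\mathbb{F}_2)$.

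For the converse I would prove the contrapositive: if some component $G_j$ is ``good'', I produce a minimum-rank matrix fitting $G$ with a nonzero diagonal entry. Take a rank-$k_j$ matrix $M_j$ fitting $G_j$ with a nonzero diagonal entry (which exists by the reformulated theorem), choose any rank-$k_i$ matrix $M_i$ fitting each remaining component, and set $M = \bigoplus_i M_i$. Then $M$ fits $G$, has rank $\sum_i k_i = k$, and inherits a nonzero diagonal entry from block $j$, so $\cb(G) = \mr(G,\mathbb{F}_2)$. The one point requiring care is the edge case of isolated-vertex components: for a single vertex the only rank-$0$ fitting matrix has zero diagonal even though $\cb = \mr = 0$, so Theorem~\ref{thm:mrzerodiag} does not apply verbatim; I would either assume throughout that each $G_i$ is nonempty (isolated vertices affect neither $\cb$ nor $\mr$ and may be deleted) or dispatch them as a trivial separate case. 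The main obstacle is therefore not any single computation but pinning down the reformulation of Theorem~\ref{thm:mrzerodiag} and the block-diagonal bookkeeping exactly; a purely combinatorial alternative via Theorem~\ref{thm:mrneqcbevenvxs} is available but would require the more delicate ``merging'' of subgraph complementation systems across components illustrated by the $W_5 + K_2$ example, which the matrix argument sidesteps.
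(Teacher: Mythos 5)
Your proof is correct and follows essentially the same route as the paper: both directions rest on the block-diagonal structure of matrices fitting $G$, rank additivity over blocks, and the uniqueness characterization of Theorem~\ref{thm:mrzerodiag}, your only deviation being that you extract the nonzero-diagonal minimum-rank block for a ``good'' component directly from the negation of Theorem~\ref{thm:mrzerodiag}, where the paper routes through Theorem~\ref{thm:mrneqcbevenvxs} to build that block as $MM^T$ from a minimum system in which some vertex appears an odd number of times. Your caveat about isolated-vertex components is justified and is in fact glossed over by the paper: for $G = W_5 + K_1$ one has $\cb(G) = 3 \neq 2 = \mr(G,\mathbb{F}_2)$ while $\cb(K_1) = \mr(K_1,\mathbb{F}_2) = 0$, so the stated equivalence (and the paper's appeal to Theorem~\ref{thm:mrneqcbevenvxs} for each component) tacitly requires every component to be nonempty, exactly as you note.
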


\begin{proof}
Let $G = G_1 + \cdots + G_t$. If $\mr(G,\mathbb{F}_2) \neq \cb(G)$, by Theorem~\ref{thm:mrzerodiag}, the adjacency matrix $A = A(G)$ is the unique matrix which fits $G$ of minimum rank over $\mathbb{F}_2$. Suppose, for the sake of contradiction, that there exists a component $G_k$ of $G$ for which $\mr(G_k,\mathbb{F}_2) = \cb(G_k)$. Notice that every matrix which fits $G$ is a block-diagonal matrix; let $A = \oplus_1^t A_i$. Furthermore, the rank of a block-diagonal matrix is minimized by minimizing the ranks of its blocks, so that $\rank(A_i) = \mr(G_i,\mathbb{F}_2)$ for each $i \in [t]$. By Theorem~\ref{thm:mrneqcbevenvxs}, there exists a minimum subgraph complementation system $\mathscr{C}$ for $G_k$ in which some vertex appears an odd number of times. Let $M=M(\mathscr{C})$ be the matrix associated to $\mathscr{C}$. Then $MM^T$ fits $G_k$, is of rank $\mr(G_k,\mathbb{F}_2)$, and has some nonzero diagonal entry. We may thus replace $A_k$ by $MM^T$ to obtain a matrix fitting $G$ of minimum rank over $\mathbb{F}_2$ with a nonzero diagonal entry, a contradiction.
\par

On the other hand, if $\mr(G_i,\mathbb{F}_2) \neq \cb(G_i)$ for every $i \in [t]$, then, for each $i$, the adjacency matrix $A_i = A(G_i)$ is the unique matrix of minimum rank over $\mathbb{F}_2$ which fits $G_i$. Thus, there is a unique matrix fitting $G$ over $\mathbb{F}_2$ of minimum rank, and it consists of the blocks $A_i$ for $i \in [t]$. By Theorem~\ref{thm:mrzerodiag}, we have $\cb(G) \neq \mr(G,\mathbb{F}_2)$, as desired.
\end{proof}

We summarize our characterization of the graphs for which $\cb(G) \neq \mr(G,\mathbb{F}_2)$ in the following theorem.
\par

\begin{thm}\label{thm:mrneqcbsummary}
Let $G$ be a nonempty graph. The following are equivalent.
\begin{enumerate}[i.]
    \item $\cb(G) \neq \mr(G,\mathbb{F}_2)$;
    \item $\cb(G) = \mr(G,\mathbb{F}_2) + 1$;
    \item there is a unique matrix $A$ of minimum rank over $\mathbb{F}_2$ which fits $G$, and every diagonal entry of $A$ is 0;
    \item there is a minimum subgraph complementation system for $G$ in which every vertex appears an even number of times;
    \item for every component $G'$ of $G$, $\cb(G') = \mr(G', \mathbb{F}_2) + 1$.
\end{enumerate}
\end{thm}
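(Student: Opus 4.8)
The plan is to establish the theorem purely by assembling the four characterizations proved earlier in this subsection; no new construction is needed, so the real work is in matching each prior statement to one of the listed conditions. I would organize the argument around condition~\emph{i} and show each of \emph{ii}--\emph{v} equivalent to it.

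First, the equivalence \emph{i}~$\Leftrightarrow$~\emph{ii} is immediate from Corollary~\ref{cor:mrcb}: that corollary asserts that $\cb(G)$ equals either $\mr(G,\mathbb{F}_2)$ or $\mr(G,\mathbb{F}_2)+1$, so the statements ``$\cb(G)\neq\mr(G,\mathbb{F}_2)$'' and ``$\cb(G)=\mr(G,\mathbb{F}_2)+1$'' say exactly the same thing.

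Next, for \emph{i}~$\Leftrightarrow$~\emph{iii} I would invoke Theorem~\ref{thm:mrzerodiag}, which says that $\cb(G)\neq\mr(G,\mathbb{F}_2)$ holds precisely when the adjacency matrix attains the minimum rank and every other matrix fitting $G$ has strictly larger rank. The one translation to record is that, over $\mathbb{F}_2$, the off-diagonal entries of any matrix fitting $G$ are forced to agree with those of the adjacency matrix, so fitting matrices differ only along the diagonal, and the adjacency matrix is exactly the unique fitting matrix with all-zero diagonal. Hence ``the adjacency matrix is the unique matrix of minimum rank fitting $G$'' is the same as ``there is a unique minimum-rank matrix $A$ fitting $G$ and every diagonal entry of $A$ is $0$,'' which is condition~\emph{iii}.

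The equivalence \emph{i}~$\Leftrightarrow$~\emph{iv} is verbatim Theorem~\ref{thm:mrneqcbevenvxs}, so nothing further is required there. Finally, for \emph{i}~$\Leftrightarrow$~\emph{v} I would start from Theorem~\ref{thm:mrneqcbcomponents}, which gives $\cb(G)\neq\mr(G,\mathbb{F}_2)$ if and only if $\cb(G_i)\neq\mr(G_i,\mathbb{F}_2)$ for every component $G_i$, and then apply the already-established dichotomy \emph{i}~$\Leftrightarrow$~\emph{ii} componentwise to rewrite each ``$\cb(G_i)\neq\mr(G_i,\mathbb{F}_2)$'' as ``$\cb(G_i)=\mr(G_i,\mathbb{F}_2)+1$,'' which is condition~\emph{v}. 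I do not anticipate a genuine obstacle, since all of the mathematical content already lives in the cited results; the only points demanding care are the diagonal bookkeeping in the \emph{i}~$\Leftrightarrow$~\emph{iii} step and the componentwise use of the dichotomy in the \emph{i}~$\Leftrightarrow$~\emph{v} step.
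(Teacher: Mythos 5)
Your proposal is correct and matches the paper exactly: the paper states Theorem~\ref{thm:mrneqcbsummary} as a summary with no separate proof, relying on precisely the assembly you describe (Corollary~\ref{cor:mrcb} for \emph{i}$\Leftrightarrow$\emph{ii}, Theorem~\ref{thm:mrzerodiag} for \emph{iii}, Theorem~\ref{thm:mrneqcbevenvxs} for \emph{iv}, and Theorem~\ref{thm:mrneqcbcomponents} for \emph{v}). Your two points of care are also the right ones; in particular the diagonal translation in step \emph{iii} is sound, since over $\mathbb{F}_2$ any matrix fitting $G$ agrees with $A(G)$ off the diagonal, and the componentwise dichotomy applies because Corollary~\ref{cor:mrcb} holds for arbitrary graphs.
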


\subsection{Tripartite subgraph complementation}

In addition to subgraph complementation, the authors of~\cite{kaminski2009recent} also defined {\em bipartite subgraph complementation}, the operation of complementing the edges between disjoint subsets $A$ and $B$ of vertices of a graph $G$. Equivalently, it is the operation of taking the symmetric difference of the edge set of $G$ and that of the complete bipartite graph with partite sets $A$ and $B$. In this section, we provide an alternative characterization of the graphs for which $\cb(G) = \mr(G,\mathbb{F}_2) + 1$ by extending the notion of bipartite subgraph complementation. We define {\em tripartite subgraph complementation} to be the operation of complementing the edges between three (possibly empty) disjoint subsets of vertices of $G$. The addition or removal of an edge of $G$ is a special case of tripartite subgraph complementation, as it is for both subgraph complementation and bipartite subgraph complementation. This leads us to define a parameter $t_2(G)$ similarly to $\cb(G)$; $t_2(G)$ is the minimum number of tripartite subgraph complementations needed to obtain $G$ from $\overline{K}_n$. Equivalently, $t_2(G)$ is the minimum number of complete tripartite graphs on subsets of $V$ such that each pair of vertices are adjacent in an odd number of complete tripartite graphs if and only if they are adjacent in $G$, or, whose symmetric difference of edge sets is $E$.

In Theorem~\ref{thm:mrcbt2}, we show that for all graphs with $\cb(G) = \mr(G,\mathbb{F}_2) + 1$, we have $\mr(G,\mathbb{F}_2) = 2t_2(G)$. In general, we will see that $\mr(G,\mathbb{F}_2) \leq 2t_2(G)$. In order to understand this relationship, we need a different interpretation of the second case of Lemma~\ref{lem:fried}.
For the remainder of this section, we define \[H_2 = \left(\begin{matrix} 0 & 1 \\ 1 & 0 \end{matrix}\right).\]

\begin{ex}\label{ex:W5}
In Example~\ref{ex:cbneqmr}, we saw that $\cb(K_{3,3}) = \mr(K_{3,3},\mathbb{F}_2) + 1$. The same is true of the wheel graph $W_5$, depicted in Figure~\ref{im:w5}, which is the smallest graph with this property. Let $v_1$ denote the center vertex of the wheel, and $v_2, \ldots, v_5$ denote the vertices around the rim, labeled cyclically. By Theorem~\ref{thm:mrzerodiag}, the adjacency matrix $A = A(W_5)$ is the only matrix of rank $\mr(W_5,\mathbb{F}_2) = 2$ which fits $W_5$. While there does not exist a $5\times 2$ matrix $X$ such that $A = XX^T$, the matrix
\[X = \left( \begin{matrix} 
1 & 1 \\ 1 & 0 \\ 0 & 1 \\ 1 & 0 \\ 0 & 1
\end{matrix}\right)\]
is such that $A = X H_2 X^T$, in accordance with Lemma~\ref{lem:fried}. Notice that $W_5$ is actually a complete tripartite graph, and that, if we label the partite sets by the vectors $(1,1)$, $(1,0)$, and $(0,1)$, $X$ may be seen as an incidence matrix for the partite sets of $W_5$. In fact, to any complete tripartite graph $G$ on $n$ vertices we can associate such an incidence matrix $M$ with $M H_2 M^T = A(G)$ (including $K_{3,3}$, which is a triclique with an empty partite set). We will extend this notion to obtain Theorem~\ref{thm:mrcbt2}.
\end{ex}

Let $G$ be a graph with vertex set $V = \{v_1, \ldots, v_n\}$ and with adjacency matrix $A = A(G)$ of rank $k$ over $\mathbb{F}_2$. By Proposition~\ref{prop:strongfried}, there exists an $n \times k$ matrix $X$ of rank $k$ such that $X (\oplus_1^{l} H_2) X^T = A$, where $k = 2l$. We group the columns of $X$ into pairs and denote the entries by
\[X = \left( \begin{matrix}
x_{11} & y_{11} & \cdots & x_{1l} & y_{1l} \\
x_{21} & y_{21} & \cdots & x_{2l} & y_{2l} \\
\vdots & \vdots & \ddots & \vdots & \vdots \\
x_{n1} & y_{n2} & \cdots & x_{nl} & y_{nl} 
\end{matrix} \right).\]
The $(i,j)$th entry of $A = X (\oplus_1^l H_2) X^T$ is
\[a_{ij} = \sum_{m=1}^l (x_{im}y_{jm} + x_{jm}y_{im}) \pmod{2},\]
of which the $m$th summand $x_{im}y_{jm} + x_{jm}y_{im}$ is 1 if and only if $x_{im}y_{jm} \neq x_{jm}y_{im}$.

Consider the collection $\mathscr{T} = \{T_1, \ldots, T_l\}$ of complete tripartite graphs on subsets of $V$, with partite sets $(X_m, Y_m, Z_m)$ for each $m \in [l]$, such that 
\[v_i \in
\begin{cases}
X_m : & \mbox{if $x_{im} = 1$ and $y_{im} = 0$;} \\
Y_m : & \mbox{if $x_{im} = 0$ and $y_{im} = 1$;} \\
Z_m : & \mbox{if $x_{im} = y_{im} = 1$;}
\end{cases}
\]
and $v_i \not\in T_m$ if $x_{im} = y_{im} = 0$. Then $v_iv_j \in E(T_m)$ if and only if $x_{im}y_{jm} \neq x_{jm}y_{im}$. Since $A = X (\oplus_1^l H_2) X^T$, we see that a pair of vertices are adjacent in $G$ if and only if they are adjacent in an odd number of tricliques in $\mathscr{T}$. Conversely, given a collection of $l$ tricliques on subsets of $V$ in which each pair $v_i, v_j \in V$ is adjacent in an odd number of tricliques if and only if $v_iv_j \in E(G)$, we can construct an $n\times 2l$ matrix $X$ in the same fashion so that $A = X (\oplus_1^l H_2) X^T$.

\begin{thm}\label{thm:mrcbt2}
For any graph $G$, we have
\[\mr(G,\mathbb{F}_2) = \min\{\cb(G), 2t_2(G)\}.\]
\end{thm}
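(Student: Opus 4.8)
The plan is to prove the two inequalities $\mr(G,\mathbb{F}_2) \leq \min\{\cb(G), 2t_2(G)\}$ and $\min\{\cb(G),2t_2(G)\} \leq \mr(G,\mathbb{F}_2)$ separately. In each direction I would exploit the two dictionaries already established in this section: the correspondence between a factorization $A = XX^T$ and a subgraph complementation system (counted by $\cb$), and the correspondence between a factorization $A = X(\oplus_1^l H_2)X^T$ and a collection of tricliques (counted by $t_2$).

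For the first inequality, the bound $\mr(G,\mathbb{F}_2) \leq \cb(G)$ is already recorded as \eqref{eq:mrlower}. To obtain $\mr(G,\mathbb{F}_2) \leq 2t_2(G)$, I would take an optimal collection of $t_2(G)$ tricliques and form the associated $n \times 2t_2(G)$ matrix $X$ with $A(G) = X(\oplus_1^{t_2(G)} H_2) X^T$, exactly as in the construction preceding the theorem. Since the adjacency matrix $A(G)$ fits $G$, we get $\mr(G,\mathbb{F}_2) \leq \rank A(G) \leq \rank X \leq 2t_2(G)$. Combining the two bounds yields $\mr(G,\mathbb{F}_2) \leq \min\{\cb(G),2t_2(G)\}$.

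For the reverse inequality, let $k = \mr(G,\mathbb{F}_2)$ and fix a matrix $A$ of rank $k$ that fits $G$. I would apply Lemma~\ref{lem:fried} together with Proposition~\ref{prop:strongfried}, splitting according to whether $A$ has a nonzero diagonal entry. If it does, then $A = XX^T$ with $X$ an $n\times k$ matrix of rank $k$, and the rows of $X$ form a subgraph complementation system of size $k$, so $\cb(G)\leq k$. If instead every diagonal entry of $A$ vanishes, then $A$ must coincide with the adjacency matrix $A(G)$, since fitting over $\mathbb{F}_2$ forces every off-diagonal entry and the diagonal is zero; moreover $k$ is even, and writing $A(G)=X(\oplus_1^{k/2} H_2)X^T$ produces a collection of $k/2$ tricliques, whence $t_2(G)\leq k/2$ and $2t_2(G)\leq k$. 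In either case $\min\{\cb(G),2t_2(G)\}\leq k$, which together with the first inequality gives equality.

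The main (and essentially only) subtlety is the observation that a zero-diagonal matrix fitting $G$ must equal $A(G)$ itself; this is what licenses reading case~{\em ii.}\ of Lemma~\ref{lem:fried} as a triclique collection for $G$, since $t_2$ is defined relative to the adjacency matrix rather than an arbitrary fitting matrix. Everything else is bookkeeping with the correspondences already set up. It is worth noting that the dichotomy is governed by the parity of $k$: when $k$ is odd we are forced into the first case and recover $\cb(G)=\mr(G,\mathbb{F}_2)$, while the graphs realizing $\min\{\cb(G),2t_2(G)\}=2t_2(G)<\cb(G)$ are precisely those whose adjacency matrix is the unique minimum-rank fitting matrix, consistent with Theorem~\ref{thm:mrzerodiag}.
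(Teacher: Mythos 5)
Your proof is correct and takes essentially the same route as the paper: both directions rest on Lemma~\ref{lem:fried} together with Proposition~\ref{prop:strongfried}, the matrix--triclique dictionary constructed just before the theorem, and the rank inequality $\rank A(G) \leq \rank X \leq 2t_2(G)$. The only organizational difference is that you split on the diagonal of a chosen minimum-rank fitting matrix, using the (correct) direct observation that a zero-diagonal fitting matrix over $\mathbb{F}_2$ must equal $A(G)$, whereas the paper supposes $\cb(G)\neq\mr(G,\mathbb{F}_2)$ and invokes Theorem~\ref{thm:mrzerodiag}; the two formulations are interchangeable here, and yours has the mild advantage of handling the cases $\cb(G)=\mr(G,\mathbb{F}_2)$ and $\cb(G)\neq\mr(G,\mathbb{F}_2)$ uniformly without the uniqueness theorem.
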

\begin{proof}
Let $G$ be a graph with adjacency matrix $A = A(G)$, and let $\mr(G,\mathbb{F}_2) = k$. Suppose that $\cb(G) \neq k$. By Theorem~\ref{thm:mrzerodiag}, $A$ is the only matrix of rank $k$ over $\mathbb{F}_2$ which fits $G$. By Lemma~\ref{lem:fried}, there exists an $n \times k$ matrix $X$ of rank $k$ over $\mathbb{F}_2$ such that $A = X(\oplus_1^l H_2) X^T$, where $k = 2l$ and $H_2 = \left(\begin{smallmatrix} 0&1 \\ 1&0 \end{smallmatrix}\right)$. Let $\mathscr{T}$ be the collection of $l$ tricliques induced by $X$. Then each pair of vertices are adjacent in $G$ if and only if they are adjacent in an odd number of tricliques in $\mathscr{T}$. Thus, $t_2(G) \leq |\mathscr{T}| =  k/2$. On the other hand, the rank of a matrix $M$ such that $A = M (\oplus_1^l H_2) M^T$ is at least the rank of $A$, since any vector in the span of $A$ is also in the span of $M$. Thus, $2t_2(G) \geq k$, which completes the proof.
\end{proof}

\subsection{Forests}\label{sec:forests}

The fact that the subgraph complementation number of a graph is at least its minimum rank over $\mathbb{F}_2$ may help us to determine the subgraph complementation number of a graph whose minimum rank over $\mathbb{F}_2$ is known. For example, the minimum rank problem over $\mathbb{R}$ is solved for trees, in the sense that the problem has been reduced from finding the minimum rank of a matrix from an infinite class to finding the optimal value of some graph parameter on a finite number of vertices (see \cite{HOGBEN20101961} for a survey). The problem is also solved for forests by the additivity of $\mr(G,\mathbb{R})$. Furthermore, the authors of~\cite{chenette2007minimum} have shown that the minimum rank of a tree is independent of the field, which can similarly be generalized to forests. Throughout this section, when $G$ is a forest, we will thus refer to $\mr(G)$ without confusion. In Theorem~\ref{cor:mrcbtree}, we prove equality of the minimum rank of a forest and its subgraph complementation number. We begin by showing this equality for linear forests.
\par

\begin{prop}\label{prop:linearforest}
If $L$ is a linear forest with $k$ components, then $\cb(L) = n-k$.
\end{prop}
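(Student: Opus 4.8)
The plan is to prove that for a linear forest $L$ with $n$ vertices and $k$ components (paths), $\cb(L) = n-k$. The key observation is that each path component behaves independently in a strong sense, so the main work reduces to understanding a single path.

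First I would recall from Theorem~\ref{thm:vxuppbound} that $\cb(P_n) = n-1$ for a single path on $n$ vertices. If the subgraph complementation number were additive over components, we would immediately get $\cb(L) = \sum_{i=1}^k (n_i - 1) = n - k$, where $n_i$ is the number of vertices in the $i$th path. However, as the earlier $W_5 + K_2$ example warns us, $\cb$ is in general only subadditive, so I cannot simply add. Thus the upper bound $\cb(L) \leq n-k$ is the easy direction: take an optimal subgraph complementation system for each path component and union them; this yields a system of cardinality $\sum (n_i - 1) = n-k$. (One could alternatively build each path edge-by-edge, which already gives $m = n-k$ edges as sets, matching the bound.)

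The hard direction is the lower bound $\cb(L) \geq n-k$. The plan is to use the minimum rank machinery developed in the preceding subsections. Since $L$ is a forest, $\mr(L)$ is field-independent (by~\cite{chenette2007minimum} as noted), and over $\mathbb{R}$ the minimum rank of a path $P_{n_i}$ is $n_i - 1$, so by additivity of $\mr$ over components, $\mr(L, \mathbb{F}_2) = \sum (n_i - 1) = n-k$. Combined with the fundamental inequality~\eqref{eq:mrlower}, namely $\mr(L,\mathbb{F}_2) \leq \cb(L)$, this gives $\cb(L) \geq n-k$, matching the upper bound and completing the proof. Here I am anticipating the result that $\cb = \mr$ for forests (Theorem~\ref{cor:mrcbtree}); to avoid circularity, the self-contained argument is that the lower bound $\cb(L) \geq \mr(L,\mathbb{F}_2) = n-k$ needs only~\eqref{eq:mrlower} together with the known value of $\mr$ for paths, not the full forest equality.

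The main obstacle, and the point requiring care, is establishing that $\mr(P_{n_i}, \mathbb{F}_2) = n_i - 1$ cleanly over $\mathbb{F}_2$. The standard argument is that the adjacency matrix of a path is a tridiagonal matrix whose off-diagonal bandstructure forces high rank: any matrix fitting $P_{n_i}$ has nonzero entries exactly on the superdiagonal and subdiagonal positions corresponding to path edges, and deleting the first row and last column of such a matrix leaves a triangular matrix with nonzero diagonal, forcing rank at least $n_i - 1$. I would verify this holds over $\mathbb{F}_2$ (it does, since the argument only uses that the relevant entries are nonzero, i.e.\ equal to $1$ in $\mathbb{F}_2$), and then invoke additivity of $\mr$ over $\mathbb{F}_2$ to conclude $\mr(L, \mathbb{F}_2) = n-k$. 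With both bounds in hand, equality follows.
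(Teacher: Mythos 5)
Your proposal is correct and takes essentially the same route as the paper: the upper bound via an explicit system of $m=n-k$ sets (the paper uses exactly the edge-sets $\{\{u,v\}\mid uv\in E(L)\}$), and the lower bound via $\cb(L)\geq \mr(L,\mathbb{F}_2)=n-k$ using inequality~\eqref{eq:mrlower} together with additivity of minimum rank over components. The only difference is how $\mr(P_{n_i})=n_i-1$ is justified---the paper cites Fiedler's Tridiagonal Matrix Theorem plus the field-independence of minimum rank for trees, whereas you give a direct (and valid) triangular-submatrix argument over $\mathbb{F}_2$---and your caveat about avoiding circularity with Theorem~\ref{cor:mrcbtree} is well placed, since the paper indeed proves this proposition first and uses it there.
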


\begin{proof}
Let $L$ be a linear forest with components $P^{(1)}, \ldots, P^{(k)}$. It is clear that $\mathscr{C} = \{\{u,v\}\mid uv\in E(L)\}$ of cardinality $\|L\| = n-k$ is a subgraph complementation system for $L$, so $\cb(L)\leq n-k$. It follows from Fiedler's Tridiagonal Matrix Theorem~\cite{fiedler1969characterization} that, for any tree $T$, $\mr(T) = n-1$ if and only if $T\sim P_n$. By the additivity of the minimum rank function, $\mr(L) = \sum_1^k \mr(P^{(i)}) = n-k$. We have seen in Section~\ref{sec:mrgeneralgraphs}, equation~\eqref{eq:mrlower}, that $\mr(L)\leq \cb(L)$, from which the result follows.
\end{proof}
 
In general, there is no straightforward relationship between $\mr (G,\mathbb{F}_2)$ and $\mr (G,\mathbb{R})$. The smallest example is the full house graph, depicted in Figure~\ref{im:forbind2}, which has minimum rank 3 over $\mathbb{F}_2$, but minimum rank 2 over any other field~\cite{barrett2009minimum}. On the other hand, the complete tripartite graph $K_{3,3,3}$ is one of the minimal forbidden induced subgraphs for the class $\{G \mid \mr(G,\mathbb{R})\leq 2\}$, but its adjacency matrix with zeros on the diagonal has rank 2 over $\mathbb{F}_2$. 
\par

We can use these examples and the additive property of minimum rank to construct examples of graphs $G$ where $\mr (G,\mathbb{F}_2)$ and $\mr (G,\mathbb{R})$ are arbitrarily far apart in either direction. We now know that $\mr (G,\mathbb{F}_2)$ and $\cb (G)$ are not necessarily equal, but always close. Their relationship with $\mr (G,\mathbb{R})$ is difficult to pin down for general graphs. However, when $F$ is a forest, $\mr (F,\mathbb{F}_2)$, $\mr (F,\mathbb{R})$ and $\cb (F)$ coincide. We conclude this section by connecting the subgraph complementation number of a forest to its {\em path cover number} $\pc(G)$, or the minimum cardinality of a collection of vertex-disjoint induced paths which cover all of the vertices of $G$. Such a collection is called a {\em path cover} of $G$. It is known that, when $T$ is a tree, $\mr(T,\mathbb{F}) = |T|-\pc(T)$ for any field $\mathbb{F}$~\cite{chenette2007minimum}. The case $\mathbb{F} = \mathbb{R}$ was proven in \cite{johnson1999maximum}.
\par

\begin{thm}\label{cor:mrcbtree}
For any forest $F$ and field $\mathbb{F}$, we have \[\cb(F)=\mr(F,\mathbb{F})=|F|-\pc (F).\]
\end{thm}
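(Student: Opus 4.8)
The plan is to prove the two equalities separately, treating $\mr(F,\mathbb{F})=|F|-\pc(F)$ as essentially known and concentrating all the real work on $\cb(F)=\mr(F,\mathbb{F}_2)$. For the arithmetic equality I would start from the cited fact that $\mr(T,\mathbb{F})=|T|-\pc(T)$ holds for every tree $T$ and every field $\mathbb{F}$~\cite{chenette2007minimum}. Both $\mr(\cdot,\mathbb{F})$ (by the additivity recalled earlier in this section) and $\pc(\cdot)$ are additive over connected components, since any induced path lives in a single component, so a minimum path cover of $F$ is a disjoint union of minimum path covers of its components. Summing $|T_i|-\pc(T_i)=\mr(T_i,\mathbb{F})$ over the components $T_i$ of $F$ then yields $\mr(F,\mathbb{F})=|F|-\pc(F)$ for every field, and in particular $\mr(F,\mathbb{F})=\mr(F,\mathbb{F}_2)$. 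Thus it remains only to establish $\cb(F)=\mr(F,\mathbb{F}_2)$, after which the whole chain of equalities closes.

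By inequality~\eqref{eq:mrlower} we already have $\mr(F,\mathbb{F}_2)\le\cb(F)$, so the content is the reverse inequality, and here I would exploit the fact that a forest with an edge always has a leaf. The key observation is purely linear-algebraic: if $\ell$ is a leaf of $F$ with unique neighbor $u$, then the column of the adjacency matrix $A=A(F)$ indexed by $\ell$ is exactly the standard basis vector $e_u$. Hence $e_u$ lies in the column space of $A$, and so the matrix $B=A+e_ue_u^{T}$, which differs from $A$ only in the diagonal entry at $u$, has every column in the column space of $A$; consequently $\rank(B)\le\rank(A)$.

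With this in hand I would argue by contradiction through Theorem~\ref{thm:mrzerodiag}. Suppose $\cb(F)\neq\mr(F,\mathbb{F}_2)$. Then that theorem forces $A=A(F)$ to have rank exactly $k:=\mr(F,\mathbb{F}_2)$ and to be the \emph{unique} matrix of rank $k$ fitting $F$ over $\mathbb{F}_2$. But $B$ fits $F$ (it agrees with $A$ in every off-diagonal entry), it satisfies $k\le\rank(B)\le\rank(A)=k$ and hence has rank $k$, and it is distinct from $A$ because its $(u,u)$ entry equals $1$. This contradicts uniqueness, so in fact $\cb(F)=\mr(F,\mathbb{F}_2)$. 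Combining this with the arithmetic equality gives $\cb(F)=\mr(F,\mathbb{F})=|F|-\pc(F)$.

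I expect the crux to be the leaf observation rather than any delicate estimate: once one notices that the adjacency column at a leaf is a coordinate vector, the escape from the ``unique adjacency matrix'' case of Theorem~\ref{thm:mrzerodiag} is immediate, and nothing about the global tree structure (or the actual value of $\pc$) is needed for the $\cb=\mr$ step. An alternative route would build a faithful orthogonal representation of dimension $|F|-\pc(F)$ directly from a minimum path cover, installing each of the $\pc(F)-1$ inter-path edges without spending a new coordinate; I would avoid this, since arranging that the required ``connection vectors'' exist at every stage is exactly the bookkeeping the leaf argument sidesteps. I would also dispose separately of the trivial edgeless case, where $\cb(F)=\mr(F,\mathbb{F}_2)=0=|F|-\pc(F)$.
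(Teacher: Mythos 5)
Your proof is correct, but it takes a genuinely different route from the paper's. The paper works constructively: it invokes the Hogben--Johnson algorithm, which produces a minimum path cover $\mathcal{P}$ of a tree $T$ in which every path contains at most one high-degree vertex, and that vertex is internal to its path; from this it builds an explicit subgraph complementation system of cardinality $|E(\mathcal{P})| = |T|-\pc(T)$ (pairs $\{u,v\}$ for cover edges between low-degree vertices, plus $N_T(v_i)$ and $N_T[v_i]$ for each high-degree vertex $v_i$), giving $\cb(T)\leq |T|-\pc(T)=\mr(T)$ directly, with the forest case following by additivity. You instead prove $\cb(F)=\mr(F,\mathbb{F}_2)$ abstractly: the adjacency column at a leaf $\ell$ with neighbor $u$ is $e_u$, so flipping the $(u,u)$ diagonal entry yields a second matrix $B\neq A$ fitting $F$ with $\rank(B)\leq\rank(A)$ (and $\rank(B)\geq\mr(F,\mathbb{F}_2)$ since $B$ fits $F$), contradicting the uniqueness forced by Theorem~\ref{thm:mrzerodiag} when $\cb(F)\neq\mr(F,\mathbb{F}_2)$; you then close the chain using the cited formula $\mr(T,\mathbb{F})=|T|-\pc(T)$ and additivity of $\mr$ and $\pc$ over components, with the edgeless case handled trivially. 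Both arguments are sound and both lean on~\cite{chenette2007minimum} for field-independence; the trade-off is that your leaf argument is shorter, avoids the path-cover algorithm entirely, and in fact proves the stronger statement that \emph{any} graph with a pendant vertex satisfies $\cb(G)=\mr(G,\mathbb{F}_2)$ (equivalently, by Theorem~\ref{thm:mrneqcbsummary}, no graph with $\cb=\mr+1$ has a degree-one vertex), whereas the paper's construction is self-contained at the level of subgraph complementation systems and hands you an explicit optimal system rather than a non-constructive existence statement filtered through the machinery of Theorem~\ref{thm:mrzerodiag}.
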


\begin{proof} We prove the result for trees, and obtain the result for forests by the addivity of the minimum rank of a graph. Let $T$ be a tree. By equation~\eqref{eq:mrlower}, $\mr(T)\leq \cb(T)$. We will show that $\cb (T) \leq |T|-\pc(T) = \mr(T)$ by finding a minimum subgraph complementation system from a minimum path cover of $T$. Hogben and Johnson presented an algorithm for finding a minimum path cover for trees~\cite{hogbenpath} (see also~\cite{fallat2007minimum}). If $\mathcal{P}$ is a path cover of $T$ produced by this algorithm, then every path in $\mathcal{P}$ contains at most one {\em high-degree vertex}, a vertex of degree 3 or more in $T$, and these vertices are never endpoints of the paths in which they lie. If $\mathcal{P}$ also covers all of the edges of $T$, then $T$ is a linear forest and Proposition~\ref{prop:linearforest} completes the proof. Otherwise, any edges which are not in $\mathcal{P}$ are adjacent to high-degree vertices, which are internal in their respective paths. Denote these high-degree vertices by $v_1,v_2,\ldots,v_k$, and define $U = \{v\in V(T)\mid d_T(v)\leq 2\}$. Let $\mathscr{C}$ be the collection consisting of the $|E(\mathcal{P})|-2k$ sets of the form $\{u,v\}$ where $u,v\in U$ and $uv\in E(\mathcal{P})$, along with the sets $N_T(v_i)$ and $N_T[v_i]$ for $1\leq i\leq k$. Then $\mathscr{C}$ is a subgraph complementation system for $T$ of cardinality $|E(\mathcal{P})| = |T|-\pc(T)$. Therefore, $\cb(T)\leq |T|-\pc(T) = \mr(T)$, which completes the proof.
\end{proof}

\section{Forbidden induced subgraphs}\label{sec:forb}

The class of graphs with subgraph complementation number at most $k$ is hereditary for any nonnegative integer $k$. We have seen that $\mr(G, \mathbb{F}_2)\leq \cb(G)$ in general, implying that 
\begin{equation}\label{eq:cbleqksubsetmrleqk}
    \{ G \mid \cb (G)\leq k \} \subseteq \{ G \mid \mr (G,\mathbb{F}_2)\leq k \}.
\end{equation}
It is known that the class of graphs $\{ G \mid \mr (G, \mathbb{F})\leq k \}$ is hereditary and finitely defined when $\mathbb{F}$ is finite~\cite{ding2006minimal}. For odd $k$, it follows from Corollary~\ref{cor:mrcb} that if $\mr(G,\mathbb{F}_2)=k$, then $\cb(G)=k$, and if $\mr(G,\mathbb{F}_2)<k$, then $\cb(G)\leq k$. Therefore, when $k$ is odd, we also have $\{ G \mid \cb (G)\leq k \} \supseteq \{ G \mid \mr (G,\mathbb{F}_2)\leq k \}$.
\par

\begin{prop}\label{prop:oddfindef}
For any odd $k$, 
\[\{G\mid\cb(G)\leq k\}=\{G\mid\mr(G,\mathbb{F}_2)\leq k\}.\]
\end{prop}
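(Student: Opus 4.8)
The plan is to prove set equality by establishing the two inclusions separately, exploiting the fact that the containment in one direction has already been observed in the paragraph immediately preceding the statement. The forward inclusion $\{G \mid \cb(G) \leq k\} \subseteq \{G \mid \mr(G,\mathbb{F}_2) \leq k\}$ holds for every $k$, not just odd $k$; it is precisely the containment recorded in equation~\eqref{eq:cbleqksubsetmrleqk}, which in turn follows directly from the basic inequality~\eqref{eq:mrlower} asserting $\mr(G,\mathbb{F}_2) \leq \cb(G)$. So for this direction there is essentially nothing new to do beyond citing that inequality.

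The substance of the proof is the reverse inclusion $\{G \mid \mr(G,\mathbb{F}_2) \leq k\} \subseteq \{G \mid \cb(G) \leq k\}$, which is special to odd $k$. First I would take an arbitrary graph $G$ with $\mr(G,\mathbb{F}_2) \leq k$ and split into two cases according to the parity of $\mr(G,\mathbb{F}_2)$. If $\mr(G,\mathbb{F}_2) = k$ (odd), then by Corollary~\ref{cor:mrcb} we cannot be in case~{\it ii.} of that corollary, since case~{\it ii.} forces $\mr(G,\mathbb{F}_2)$ to be even; hence we are in case~{\it i.}, giving $\cb(G) = \mr(G,\mathbb{F}_2) = k \leq k$. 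If instead $\mr(G,\mathbb{F}_2) < k$, then by Corollary~\ref{cor:mrcb} we have $\cb(G) \leq \mr(G,\mathbb{F}_2) + 1 \leq (k-1) + 1 = k$, where I use that $\mr(G,\mathbb{F}_2) \leq k-1$ because $\mr(G,\mathbb{F}_2)$ is an integer strictly less than $k$. Either way $\cb(G) \leq k$, establishing the reverse inclusion.

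Combining the two inclusions yields the claimed equality. There is no real obstacle here: the entire argument is a short parity-and-casework deduction from Corollary~\ref{cor:mrcb}, and indeed the text preceding the proposition has already spelled out both halves of the reasoning (``if $\mr(G,\mathbb{F}_2)=k$, then $\cb(G)=k$, and if $\mr(G,\mathbb{F}_2)<k$, then $\cb(G)\leq k$''). The only point demanding a moment of care is the first case, where one must invoke the parity restriction in case~{\it ii.} of Corollary~\ref{cor:mrcb} to rule out the possibility $\cb(G) = \mr(G,\mathbb{F}_2) + 1 = k+1$; it is precisely the oddness of $k$ that makes this exclusion work, and this is exactly why the stronger equality fails for even $k$.
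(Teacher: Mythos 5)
Your proof is correct and is essentially identical to the paper's own argument, which appears in the paragraph immediately preceding the proposition: the forward inclusion is inequality~\eqref{eq:mrlower} (recorded as~\eqref{eq:cbleqksubsetmrleqk}), and the reverse inclusion for odd $k$ is the same parity casework from Corollary~\ref{cor:mrcb}, splitting on whether $\mr(G,\mathbb{F}_2)=k$ or $\mr(G,\mathbb{F}_2)<k$. No gaps; your handling of the parity exclusion in the first case matches the paper's reasoning exactly.
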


In particular, the classes $\{G\mid\cb(G)\leq k\}$ and $\{G\mid\mr(G,\mathbb{F}_2)\leq k\}$ for odd $k$ are defined by the same finite set of minimal forbidden induced subgraphs. The two minimal forbidden induced subgraphs for $k=1$ are evident, as a graph with $c_2(G) \leq 1$ consists of a single clique and/or isolated vertices. That is, the class of graphs $\{ G \mid \cb(G)\leq 1 \}$ is the class of $\{ P_3, 2K_2\}$-free graphs. We obtain as a corollary to Proposition~\ref{prop:oddfindef} that the set of minimal forbidden induced subgraphs for the property $\cb(G)\leq 3$ is the same set given in the following theorem and listed explicitly in~\cite{barrett2009minimum}.
\par

\begin{thm}\label{thm:forbind3}{\rm \cite{barrett2009minimum}}
The class of graphs
\[\{G\mid\mr(G,\mathbb{F}_2)\leq 3\}\]
is defined by forbidding a set of 62 minimal induced subgraphs, each of which has 8 or fewer vertices.
\end{thm}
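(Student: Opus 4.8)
The plan is to treat this as a structural-plus-computational characterization, in the spirit of other finite-field minimum-rank results. Since $\mr(\cdot,\mathbb{F}_2)$ is monotone under taking induced subgraphs, the class $\{G \mid \mr(G,\mathbb{F}_2)\le 3\}$ is hereditary, and by~\cite{ding2006minimal} it is finitely defined; let $\cF$ denote its (finite, nonempty) set of minimal forbidden induced subgraphs. A graph $G$ lies in $\cF$ precisely when $\mr(G,\mathbb{F}_2)\ge 4$ while $\mr(G-v,\mathbb{F}_2)\le 3$ for every $v\in V(G)$. The theorem then splits into two tasks: (I) prove that every $G\in\cF$ has at most $8$ vertices, and (II) search all graphs on at most $8$ vertices to extract exactly the $62$ members of $\cF$ and list them.

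For the order bound in (I), the main engine is a twin-reduction argument. A symmetric matrix over $\mathbb{F}_2$ of rank at most $3$ has all its rows in a subspace of dimension at most $3$, hence takes at most $2^3=8$ distinct row vectors; two vertices sharing a row are twins, \emph{non-adjacent} twins when the common diagonal entry is $0$ and \emph{adjacent} twins when it is $1$. First I would record the standard duplicate-vertex fact that deleting one of a pair of non-adjacent twins leaves $\mr(\cdot,\mathbb{F}_2)$ unchanged: if $G$ had such a pair $u,v$, then $\mr(G-v,\mathbb{F}_2)=\mr(G,\mathbb{F}_2)\ge 4$, contradicting minimality. Thus no $G\in\cF$ has non-adjacent twins, and an analogous (more delicate) lemma controls adjacent twins over $\mathbb{F}_2$. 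Applying the $8$-row bound to a rank-$3$ realization of each subgraph $G-v$ shows that $G-v$ has at most $8$ twin-classes, so if $G$ were large it would contain many vertices that agree up to their adjacency with a single deleted vertex; fixing that adjacency produces twins of $G$ and hence a vertex that may be removed without dropping the rank below $4$. This already yields a finite, if generous, bound on $|G|$.

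Sharpening this to the claimed bound of exactly $8$ vertices is where the real work lies, and it is the step I expect to be the main obstacle. The crude twin-counting above over-counts (roughly $8$ classes times a bounded multiplicity, producing something larger than $8$), so one must analyze the rank-$3$ symmetric $\mathbb{F}_2$-matrices and their associated graphs far more closely --- for instance using the decomposition of Lemma~\ref{lem:fried} to understand exactly which graphs are realized by a rank-$3$ matrix with a prescribed diagonal, and how these realizations can fail to glue together across the deletions $G-v$. This case analysis, separating the $XX^T$ and $X(\oplus H_2)X^T$ cases and tracking which row-patterns are forced, is precisely the detailed argument of~\cite{barrett2009minimum}.

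Finally, task (II) is a finite, computer-assisted verification: with $|G|\le 8$ fixed, enumerate all isomorphism classes of graphs on at most $8$ vertices, compute $\mr(G,\mathbb{F}_2)$ for each (which is decidable, e.g.\ by minimizing rank over the finitely many symmetric matrices fitting $G$, or via the structure in Lemma~\ref{lem:fried}), and retain those $G$ with $\mr(G,\mathbb{F}_2)\ge 4$ all of whose one-vertex deletions satisfy $\mr\le 3$. This produces the $62$ graphs, which are then displayed explicitly. The reliability of this enumeration hinges entirely on the correctness of the order bound from (I), which is why I regard that bound as the crux of the proof.
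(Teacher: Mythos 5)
There is a genuine gap, and also a mismatch of expectations: the paper does not prove this statement at all --- Theorem~\ref{thm:forbind3} is quoted verbatim from~\cite{barrett2009minimum}, and the paper only uses it (via Proposition~\ref{prop:oddfindef}) to conclude that the same $62$ graphs characterize $\cb(G)\leq 3$. Your proposal, read closely, also does not prove it: the two decisive steps are (a) the sharp order bound $|G|\leq 8$, which you explicitly defer (``precisely the detailed argument of~\cite{barrett2009minimum}'') after conceding that your twin-counting only gives a ``generous'' bound, and (b) the extraction of the $62$ graphs, which is an enumeration you describe but do not carry out. A plan whose crux is a citation to the very theorem being proved plus an unexecuted computation establishes nothing beyond what~\cite{ding2006minimal} already gives, namely finiteness of $\cF$.

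Worse, the one concrete tool you do sketch --- the ``standard duplicate-vertex fact'' that deleting one of a pair of non-adjacent twins preserves $\mr(\cdot,\mathbb{F}_2)$, hence that no minimal forbidden graph has non-adjacent twins --- is false over $\mathbb{F}_2$. Duplicating the row and column of $u$ in a symmetric matrix $A$ forces the new off-diagonal entry to equal the diagonal entry $a_{uu}$, so a non-adjacent twin can be added at no rank cost only if some minimum-rank realization has $a_{uu}=0$; this is exactly the diagonal obstruction quantified in Proposition~\ref{prop:strongfried} and Theorem~\ref{thm:mrzerodiag}, and it can fail. Concretely, $K_4-e$ has $\mr(K_4-e,\mathbb{F}_2)=2$, yet deleting one of its non-adjacent twins leaves $K_3$ with minimum rank $1$ (the rank-$1$ realization $J$ of $K_3$ has all-ones diagonal, so the twin cannot be duplicated for free). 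And the rank-$2$ analogue of your claimed lemma is refuted by the paper's own Figure~\ref{im:forbind2}: $P_3\vee P_3$ is a minimal forbidden induced subgraph for $\mr(G,\mathbb{F}_2)\leq 2$, yet the two endpoints of either $P_3$ are non-adjacent twins in it. So the engine driving your step (I) breaks precisely at the field-dependent diagonal issues that make the $\mathbb{F}_2$ case delicate, and the order bound --- the crux, as you yourself identify --- remains unproved.
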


On the other hand, when $k$ is even, it does not follow from Proposition~\ref{prop:oddfindef} that $\{ G \mid \cb (G)\leq k \}$ is finitely defined. We prove this in the following theorem.
\par

\begin{thm}\label{thm:cbfindef}
For any natural number $k$, the class of graphs 
$$\{ G \mid \cb(G)\leq k \}$$
is defined by forbidding a finite set of induced subgraphs.
\end{thm}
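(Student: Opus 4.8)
The plan is to bound the order of any minimal forbidden induced subgraph, since a hereditary class is finitely defined exactly when its minimal forbidden induced subgraphs have bounded order. By Proposition~\ref{prop:oddfindef} the odd case coincides with the finitely defined class $\{G\mid\mr(G,\mathbb{F}_2)\le k\}$~\cite{ding2006minimal}, so I would take $k$ to be even. Let $H$ be a minimal forbidden induced subgraph, so $\cb(H)>k$ while $\cb(H-v)\le k$ for every $v$. Deleting a vertex removes one row and column of a symmetric matrix, so $\mr(G-v,\mathbb{F}_2)\le\mr(G,\mathbb{F}_2)\le\mr(G-v,\mathbb{F}_2)+2$; combined with $\mr(\cdot,\mathbb{F}_2)\le\cb(\cdot)$ and Corollary~\ref{cor:mrcb}, this gives $\mr(H,\mathbb{F}_2)\ge\cb(H)-1\ge k$ and $\mr(H-v,\mathbb{F}_2)\le\cb(H-v)\le k$ for all $v$, whence $\mr(H,\mathbb{F}_2)\in\{k,k+1,k+2\}$.

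First I would dispose of the cases $\mr(H,\mathbb{F}_2)=k+1$ and $\mr(H,\mathbb{F}_2)=k+2$. In the former, $H$ is a minimal forbidden induced subgraph for $\{G\mid\mr(G,\mathbb{F}_2)\le k\}$, since its minimum rank exceeds $k$ while every $\mr(H-v,\mathbb{F}_2)\le k$; in the latter, $H$ is likewise minimal forbidden for $\{G\mid\mr(G,\mathbb{F}_2)\le k+1\}$. Both minimum-rank classes are finitely defined~\cite{ding2006minimal}, so in either case $|H|$ is bounded in terms of $k$. This leaves the essential case $\mr(H,\mathbb{F}_2)=k$, where $\cb(H)=k+1=\mr(H,\mathbb{F}_2)+1$; by Theorem~\ref{thm:mrneqcbsummary}, every component of $H$ then satisfies $\cb(H')=\mr(H',\mathbb{F}_2)+1$, and $H$ has no isolated vertex (else deleting it would not lower $\cb$).

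Next I would reduce this case to a \emph{connected} $H$ via the component characterization. Suppose $H$ is disconnected with a component $C$. Using the additivity of $\mr(\cdot,\mathbb{F}_2)$ together with the component criterion of Theorem~\ref{thm:mrneqcbsummary}, one checks that for each $v\in C$ minimality forces $\cb(C-v)\le\mr(C,\mathbb{F}_2)$, whether or not $C-v$ lies in the family $\{G\mid\cb(G)=\mr(G,\mathbb{F}_2)+1\}$; since also $\cb(C)=\mr(C,\mathbb{F}_2)+1$, each component $C$ is itself a connected minimal forbidden induced subgraph for $\{G\mid\cb(G)\le\mr(C,\mathbb{F}_2)\}$. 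By Corollary~\ref{cor:mrcb} each $\mr(C,\mathbb{F}_2)$ is a positive even number, hence at least $2$, so $H$ has at most $k/2$ components, each of minimum rank strictly less than $k$. Thus a bound on connected minimal forbidden subgraphs at every even level yields a bound on $H$.

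The remaining and hardest case is a connected $H$ with $\mr(H,\mathbb{F}_2)=k$ and $\cb(H)=k+1$. Here Theorem~\ref{thm:mrzerodiag} makes the adjacency matrix $A(H)$ the unique rank-$k$ matrix fitting $H$, and the triclique decomposition of Section~\ref{sec:minrank} writes $A(H)=X\bigl(\oplus_1^{l}H_2\bigr)X^{T}$ for an $n\times k$ matrix $X$ of rank $k$ (with $k=2l$) whose rows are nonzero vectors of $\mathbb{F}_2^{k}$. Since the diagonal of such a product is zero, any two vertices sharing a row of $X$ are mutual non-adjacent twins, so $V(H)$ partitions into at most $2^{k}-1$ classes of equal rows; it then suffices to bound the size of each class. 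The plan is to show that once a class is large enough, one of its vertices can be deleted while the adjacency matrix of the result remains its unique minimum-rank matrix, so that the deleted-vertex subgraph is still forbidden, contradicting minimality. I expect this step to be the main obstacle: membership in $\{G\mid\cb(G)=\mr(G,\mathbb{F}_2)+1\}$ is \emph{not} monotone under vertex duplication—for instance $\cb(K_{2,3})=\mr(K_{2,3},\mathbb{F}_2)$ while $\cb(K_{3,3})=\mr(K_{3,3},\mathbb{F}_2)+1$, and $K_{3,3}$ arises from $K_{2,3}$ by adding one non-adjacent twin—so the multiplicity bound cannot come from a naive monotonicity argument but must instead be extracted from the linear algebra over $\mathbb{F}_2$ governing when a zero-diagonal symmetric matrix of prescribed rank is the unique matrix fitting its graph.
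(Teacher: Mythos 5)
Your reductions are sound as far as they go: the cases $\mr(H,\mathbb{F}_2)\in\{k+1,k+2\}$ do make $H$ a minimal forbidden induced subgraph for a minimum-rank class, which is finitely defined by~\cite{ding2006minimal}; and your component reduction works, since if some component $C$ of a disconnected $H$ had a vertex $v$ with $\cb(C-v)=\mr(C-v,\mathbb{F}_2)+1$ and $\mr(C-v,\mathbb{F}_2)=\mr(C,\mathbb{F}_2)$, then Theorem~\ref{thm:mrneqcbsummary} applied to $H-v$ together with additivity of $\mr(\cdot,\mathbb{F}_2)$ would force $\cb(H-v)=k+1$, contradicting minimality. But the proof is incomplete exactly where the theorem has content: the connected case with $\mr(H,\mathbb{F}_2)=k$ even and $\cb(H)=k+1$. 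There you correctly reduce to bounding the multiplicities of the at most $2^k-1$ classes of equal rows of $X$ (pairwise nonadjacent twins), correctly observe that membership in $\{G\mid \cb(G)=\mr(G,\mathbb{F}_2)+1\}$ is not monotone under adding a nonadjacent twin (your $K_{2,3}$ versus $K_{3,3}$ example is right), and then stop with a stated plan and an acknowledged obstacle. No lemma is supplied that controls when deleting a twin preserves the property that the zero-diagonal adjacency matrix is the \emph{unique} minimum-rank fitting matrix, so the multiplicity bound --- and hence the order bound, and hence the theorem for even $k$ --- is not established.

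It is worth noting that the paper's proof avoids this difficulty entirely by not bounding orders at all. It first shows that any minimal forbidden $F$ satisfies $\cb(F)\leq k+2$ (adjoin $\{N(v),N[v]\}$ to a system for $F-v$), then encodes $F$ by the vector in $\mathbb{N}^{s}$, $s=2^{k+2}$, recording how many vertices realize each membership pattern with respect to a system of cardinality $k+2$; componentwise domination of these vectors implies induced-subgraph containment, so the minimal forbidden graphs map to an antichain in $\mathbb{N}^{s}$, which is finite by the well-quasi-order of $\mathbb{N}^{s}$~\cite{kruskal1972theory}. This tolerates arbitrarily large twin classes, which is precisely what your approach must rule out by hand. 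Indeed, the paper remarks after Theorem~\ref{thm:cbfindef} that its argument yields no explicit bound on the order of minimal forbidden subgraphs and leaves the bound $2k+2$ as a conjecture; completing your program would prove a strictly stronger statement than the theorem itself, which is strong evidence that the missing step is not a routine fill-in but the genuine open difficulty.
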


\begin{proof} Let $F$ be a minimal forbidden induced subgraph for the property $\cb(G)\leq k $. First, we claim that $\cb(F)\leq k+2 $. Suppose, for the sake of contradiction, that $\cb(F)\geq k+3 $. Then, for any $v \in V(F)$ and subgraph complementation system $\mathscr{C}'$ for $F-v$, we have that $\mathscr{C} = \mathscr{C}'\cup \{N(v),N[v]\}$ is a subgraph complementation system for $F$, which implies that $\cb(F-v)\geq k+1$. This contradicts the minimality of $F$. 
\par

Now, there exists a subgraph complementation system $\mathscr{C}$ for $F$ of cardinality $k+2$. We can associate to $F$ a vector of length $s=2^{k+2}$, where each entry corresponds to an element of the powerset $2^\mathscr{C}$, such that each entry of the vector is a non-negative integer that counts the number of vertices of $F$ that are in a given subcollection of $\mathscr{C}$. This vector defines the graph $F$ up to isomorphism. It is easy to verify that, if two graphs $F_a$ and $F_b$ have vectors $(a_1,\dots,a_s)$ and $(b_1,\dots,b_s)$ such that $a_i \leq b_i$ for $1 \leq i \leq s$, then $F_a$ is an induced subgraph of $F_b$. We now see that the poset of forbidden induced subgraphs for the property $\cb(G)\leq k $ ordered by the induced subgraph relation can be embedded in the poset $\mathbb{N}^s$, which is the direct product of the poset $\mathbb{N}$ ordered by $\leq$. It is known that a direct product of finitely many posets that are well-founded and that have no infinite anti-chains is itself well-founded and has no infinite anti-chains~\cite{kruskal1972theory}. Furthermore, any restriction of such a poset has the same properties. This completes the proof to show that the poset of forbidden induced subgraphs for the property $\cb(G)\leq k $, ordered by the induced subgraph relation, is well-founded with a finite number of minimal elements.
\end{proof}

Theorem~\ref{thm:cbfindef} only guarantees that the set of minimal forbidden induced subgraphs for the property $\cb(G)\leq k $ is finite; it does not provide an explicit upper bound. Based on the results concerning linear forests, we present the following conjecture.
\par

\begin{conj}
A minimal forbidden induced subgraph for the property $\cb(G) \leq k$ has at most $2k+2$ vertices.
\end{conj}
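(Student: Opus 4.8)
The plan is to bound the order of a minimal forbidden induced subgraph $F$ for the property $\cb(G)\le k$ by combining the two facts we already have with a reduction argument that exploits minimality. From the proof of Theorem~\ref{thm:cbfindef} we know $k+1\le \cb(F)\le k+2$, and minimality gives $\cb(F-v)\le k$ for every $v\in V(F)$. A first, free observation is that $F$ has no isolated vertices: an isolated vertex contributes nothing to a subgraph complementation system, so $\cb(F-v)=\cb(F)>k$ would contradict minimality. The extremal examples to keep in mind are $(k+1)K_2$ and $P_{k+2}$: by Proposition~\ref{prop:linearforest} both are forbidden with $\cb=k+1$, both become realizable after any single vertex deletion, and the former has exactly $2k+2$ vertices. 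The bound is therefore tight and must be ``charged'' to the disjoint-edges configuration, which already warns that a purely rank-based estimate cannot suffice: for $(k+1)K_2$ we have $n=2\,\mr(F,\mathbb{F}_2)$, whereas for $K_{3,3}$, also extremal when $k=2$, we have $n=2\,\mr(F,\mathbb{F}_2)+2$.

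Because $\mr$ and $\cb$ behave so differently across these examples, I would not try to bound $n$ through $\mr(F,\mathbb{F}_2)$ alone. Instead I would use Gallai's identity $n=\tau(F)+\alpha(F)$, where $\alpha(F)$ is the independence number, and aim at the two symmetric estimates $\tau(F)\le k+1$ and $\alpha(F)\le k+1$, each tight on $(k+1)K_2$. For the independence bound, fix a vertex $v$ in a maximum independent set $I$ and take a minimum faithful orthogonal representation $x\colon V(F-v)\to\mathbb{F}_2^{d'}$ with $d'=\cb(F-v)\le k$. If $|I|\ge k+2$, then the $k+1$ vectors $\{x_u:u\in I\setminus\{v\}\}$ are linearly dependent in $\mathbb{F}_2^{d'}$, so some nonempty $J\subseteq I\setminus\{v\}$ satisfies $\sum_{u\in J}x_u=0$. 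Since $J$ lies inside an independent set, this forces every vertex of $F$ to have an even number of neighbours in $J$; that is, $\mathbf 1_J\in\ker A(F)$, the kernel of the adjacency matrix. The goal is to convert such a dependency into either a vertex whose deletion leaves $\cb$ above $k$ (contradicting minimality) or a re-insertion of $v$ within dimension $k$ (contradicting $\cb(F)>k$), thereby ruling out $|I|\ge k+2$. For the vertex-cover half I would run a dual argument on the representation of $F$ itself, where a spanning estimate cheaply yields $\tau(F)\le\cb(F)\le k+2$; the point is that recovering the \emph{sharp} constant $k+1$ requires discarding one unit, and this is exactly where the case analysis below must enter.

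To handle the parity obstructions that separate $\cb$ from $\mr$, I would split on the dichotomy of Theorem~\ref{thm:mrneqcbsummary}. When $\cb(F)=\mr(F,\mathbb{F}_2)$ there is a minimum-rank fitting matrix with a nonzero diagonal entry, so one can peel a vertex out of a nonsingular principal ``core'' of that matrix; the remaining vertices are twins of, or linear combinations of, core vertices, and minimality must cap their number. When $\cb(F)=\mr(F,\mathbb{F}_2)+1$ the adjacency matrix is the unique minimum-rank fit, every vertex appears an even number of times, and by Theorem~\ref{thm:mrcbt2} we have $\mr(F,\mathbb{F}_2)=2t_2(F)$, so $F$ is the symmetric difference of $t_2(F)$ tricliques; here I would bound the vertices contributed by each triclique after merging twins, using that every deletion must drop the minimum rank by exactly two. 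The graphs $(k+1)K_2$ (first regime) and $K_{3,3}$-type graphs (second regime) show that both cases genuinely arise and that $k+1$ is the correct target for each of $\tau(F)$ and $\alpha(F)$.

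I expect the crux to be precisely the step in which a redundant vertex is excised and the remainder re-represented \emph{without changing the required dimension}. The same parity phenomenon that produces the $+1$ gap between $\cb$ and $\mr$---for instance the impossibility of making two vertices true twins when their common vector is self-orthogonal, which is exactly why $\cb(2K_2)=2$ rather than $1$---can make a deletion lower $\cb$ by the ``wrong'' amount, so a linear dependency among representation vectors need not translate into an honest reduction of the obstruction. Guaranteeing that the algebraic redundancy forced by $|V(F)|>2k+2$ can always be realized as a genuine induced-subgraph reduction consistent with the threshold $k$ is the essential difficulty, and it is presumably why the statement is posed here as a conjecture rather than a theorem.
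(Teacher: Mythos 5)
This statement is posed in the paper as a conjecture; the paper offers no proof of it, so there is nothing to compare your attempt against, and indeed your own final paragraph concedes that the crux is unresolved (``it is presumably why the statement is posed here as a conjecture rather than a theorem''). What you have written is a research program, not a proof. Its sound ingredients are exactly the ones already in the paper: the bounds $k+1\leq \cb(F)\leq k+2$ via the $\{N(v),N[v]\}$ trick from the proof of Theorem~\ref{thm:cbfindef}, and the tight examples $(k+1)K_2$, $P_{k+2}$, and $K_{3,3}$, which correctly show the conjectured bound $2k+2$ would be sharp and that any argument must survive the parity gap between $\cb$ and $\mr$.

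Beyond that, the concrete steps you do assert contain errors. First, the ``spanning estimate'' $\tau(F)\leq \cb(F)$ is false in general: $\cb(K_n)=1$ while $\tau(K_n)=n-1$, and the paper's inequality (Theorem~\ref{thm:vxcoverupp}) runs the other way, $\cb(G)\leq 2\tau(G)$, which only yields the useless lower bound $\tau\geq \cb/2$; so the vertex-cover half of your Gallai decomposition $n=\tau+\alpha$ has no valid starting point, let alone the sharp target $\tau(F)\leq k+1$. Second, in the independence half, your dependency $\sum_{u\in J}x_u=0$ lives in a representation of $F-v$, so it only tells you that vertices of $F-v$ have evenly many neighbours in $J$ (plus $x_w\cdot x_w=0$ for $w\in J$); the conclusion $\mathbf{1}_J\in\ker A(F)$ is unjustified at the row of $v$, and the pivotal step --- converting the dependency into a contradiction with minimality or with $\cb(F)>k$ --- is announced as ``the goal,'' not carried out. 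Third, in the regime $\cb(F)=\mr(F,\mathbb{F}_2)+1$, your claim that ``every deletion must drop the minimum rank by exactly two'' does not follow: from $\cb(F-v)\leq k$ and $\mr(F-v)\geq \mr(F)-2$ one only gets $\mr(F-v)\in\{k-2,k-1,k\}$, and $\cb(F-v)=\mr(F-v)=k$ is perfectly consistent with minimality. (You do use Theorem~\ref{thm:mrcbt2} correctly: when $\cb(F)\neq\mr(F,\mathbb{F}_2)$, the minimum in $\mr=\min\{\cb,2t_2\}$ forces $\mr(F,\mathbb{F}_2)=2t_2(F)$.) In short, every load-bearing implication is either false as stated or deferred, so the statement remains, as in the paper, a conjecture.
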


By analyzing the structure of graphs with $c_2(G) \leq 2$, we can find the set of minimal forbidden induced subgraphs for this property.  This is the set given in Theorem~\ref{thm:forbind2} and depicted in Figure~\ref{im:forbind2} (A).
\par

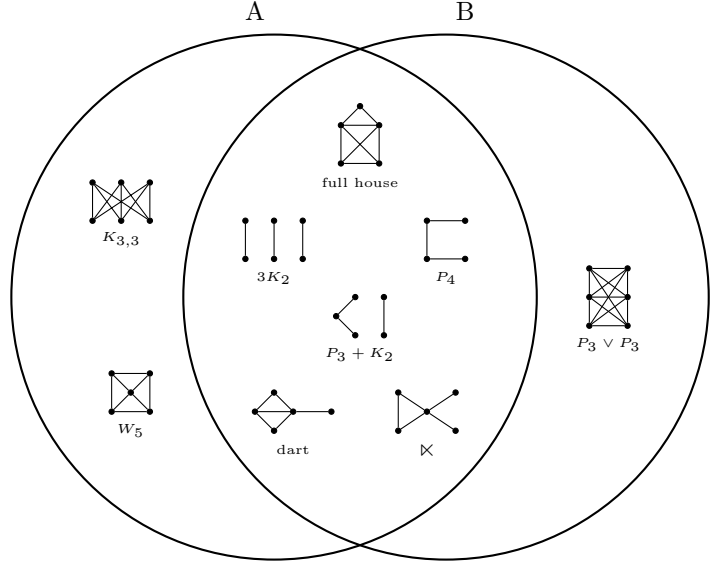
\begin{figure}
 \centering
\begin{tikzpicture}[x=.1in,y=.1in, every edge/.style = {draw, thick}]

\node[circle, draw, thick, inner sep=0pt, minimum size=2.75in] (cb) at (-4.5,0) {};
\node[circle, draw, thick, inner sep=0pt, minimum size=2.75in] (mr) at (4.5,0) {};
\node (A) at (-5.5,15) {A};
\node (B) at (5.5,15) {B};

\node[circle, draw, fill=black!100, inner sep=0pt, minimum size=2pt] (P40) at (5.5,2) {};
\node[circle, draw, fill=black!100, inner sep=0pt, minimum size=2pt] (P41) at (3.5,2) {};
\node[circle, draw, fill=black!100, inner sep=0pt, minimum size=2pt] (P42) at (3.5,4) {};
\node[circle, draw, fill=black!100, inner sep=0pt, minimum size=2pt] (P43) at (5.5,4) {};
\draw (P40) -- (P41) -- (P42) -- (P43);
\node (P4) at (4.5,1) {\tiny $P_4$};

\node[circle, draw, fill=black!100, inner sep=0pt, minimum size=2pt] (P3K20) at (-1.25,-1) {};
\node[circle, draw, fill=black!100, inner sep=0pt, minimum size=2pt] (P3K21) at (-.25,-2) {};
\node[circle, draw, fill=black!100, inner sep=0pt, minimum size=2pt] (P3K22) at (-.25,0) {};
\node[circle, draw, fill=black!100, inner sep=0pt, minimum size=2pt] (P3K23) at (1.25,-2) {};
\node[circle, draw, fill=black!100, inner sep=0pt, minimum size=2pt] (P3K24) at (1.25,0) {};
\draw (P3K21) -- (P3K20) -- (P3K22);
\draw (P3K23) -- (P3K24);
\node (P3K2) at (0,-3) {\tiny $P_3+K_2$};

\node[circle, draw, fill=black!100, inner sep=0pt, minimum size=2pt] (3K20) at (-6,2) {};
\node[circle, draw, fill=black!100, inner sep=0pt, minimum size=2pt] (3K21) at (-4.5,2) {};
\node[circle, draw, fill=black!100, inner sep=0pt, minimum size=2pt] (3K22) at (-3,2) {};
\node[circle, draw, fill=black!100, inner sep=0pt, minimum size=2pt] (3K23) at (-6,4) {};
\node[circle, draw, fill=black!100, inner sep=0pt, minimum size=2pt] (3K24) at (-4.5,4) {};
\node[circle, draw, fill=black!100, inner sep=0pt, minimum size=2pt] (3K25) at (-3,4) {};
\draw (3K20) -- (3K23);
\draw (3K21) -- (3K24);
\draw (3K22) -- (3K25);
\node (3K2) at (-4.5,1) {\tiny $3K_2$};

\node[circle, draw, fill=black!100, inner sep=0pt, minimum size=2pt] (FullHouse0) at (0,10) {};
\node[circle, draw, fill=black!100, inner sep=0pt, minimum size=2pt] (FullHouse1) at (-1,9) {};
\node[circle, draw, fill=black!100, inner sep=0pt, minimum size=2pt] (FullHouse2) at (-1,7) {};
\node[circle, draw, fill=black!100, inner sep=0pt, minimum size=2pt] (FullHouse3) at (1,7) {};
\node[circle, draw, fill=black!100, inner sep=0pt, minimum size=2pt] (FullHouse4) at (1,9) {};
\draw (FullHouse0) -- (FullHouse1) -- (FullHouse2) -- (FullHouse3) -- (FullHouse4) -- (FullHouse0);
\draw (FullHouse2) -- (FullHouse4) -- (FullHouse1) -- (FullHouse3);
\node (FullHouse) at (0,6) {\tiny full house};

\node[circle, draw, fill=black!100, inner sep=0pt, minimum size=2pt] (Dart0) at (-5.5,-6) {};
\node[circle, draw, fill=black!100, inner sep=0pt, minimum size=2pt] (Dart1) at (-3.5,-6) {};
\node[circle, draw, fill=black!100, inner sep=0pt, minimum size=2pt] (Dart2) at (-1.5,-6) {};
\node[circle, draw, fill=black!100, inner sep=0pt, minimum size=2pt] (Dart3) at (-4.5,-5) {};
\node[circle, draw, fill=black!100, inner sep=0pt, minimum size=2pt] (Dart4) at (-4.5,-7) {};
\draw (Dart0) -- (Dart1) -- (Dart2);
\draw (Dart0) -- (Dart3) -- (Dart1) -- (Dart4) -- (Dart0);
\node (Dart) at (-3.5,-8) {\tiny dart};

\node[circle, draw, fill=black!100, inner sep=0pt, minimum size=2pt] (ltimes0) at (2,-5) {};
\node[circle, draw, fill=black!100, inner sep=0pt, minimum size=2pt] (ltimes1) at (2,-7) {};
\node[circle, draw, fill=black!100, inner sep=0pt, minimum size=2pt] (ltimes2) at (3.5,-6) {};
\node[circle, draw, fill=black!100, inner sep=0pt, minimum size=2pt] (ltimes3) at (5,-5) {};
\node[circle, draw, fill=black!100, inner sep=0pt, minimum size=2pt] (ltimes4) at (5,-7) {};
\draw (ltimes0) -- (ltimes1) -- (ltimes2) -- (ltimes0);
\draw (ltimes4) -- (ltimes2) -- (ltimes3);
\node (ltimes) at (3.5,-8) {\small $\ltimes$};

\node[circle, draw, fill=black!100, inner sep=0pt, minimum size=2pt] (K330) at (-14,4) {};
\node[circle, draw, fill=black!100, inner sep=0pt, minimum size=2pt] (K331) at (-12.5,4) {};
\node[circle, draw, fill=black!100, inner sep=0pt, minimum size=2pt] (K332) at (-11,4) {};
\node[circle, draw, fill=black!100, inner sep=0pt, minimum size=2pt] (K333) at (-14,6) {};
\node[circle, draw, fill=black!100, inner sep=0pt, minimum size=2pt] (K334) at (-12.5,6) {};
\node[circle, draw, fill=black!100, inner sep=0pt, minimum size=2pt] (K335) at (-11,6) {};
\draw (K330) -- (K333) -- (K331) -- (K334) -- (K332) -- (K335) -- (K330);
\draw (K330) -- (K334);
\draw (K331) -- (K335);
\draw (K332) -- (K333);
\node (K33) at (-12.5,3) {\tiny $K_{3,3}$};

\node[circle, draw, fill=black!100, inner sep=0pt, minimum size=2pt] (W50) at (-12,-5) {};
\node[circle, draw, fill=black!100, inner sep=0pt, minimum size=2pt] (W51) at (-13,-4) {};
\node[circle, draw, fill=black!100, inner sep=0pt, minimum size=2pt] (W52) at (-11,-4) {};
\node[circle, draw, fill=black!100, inner sep=0pt, minimum size=2pt] (W53) at (-11,-6) {};
\node[circle, draw, fill=black!100, inner sep=0pt, minimum size=2pt] (W54) at (-13,-6) {};
\draw (W50) -- (W51);
\draw (W50) -- (W52);
\draw (W50) -- (W53);
\draw (W50) -- (W54);
\draw (W51) -- (W52) -- (W53) -- (W54) -- (W51);
\node (W5) at (-12,-7) {\tiny $W_5$};

\node[circle, draw, fill=black!100, inner sep=0pt, minimum size=2pt] (P3jP30) at (12,1.5) {};
\node[circle, draw, fill=black!100, inner sep=0pt, minimum size=2pt] (P3jP31) at (12,0) {};
\node[circle, draw, fill=black!100, inner sep=0pt, minimum size=2pt] (P3jP32) at (12,-1.5) {};
\node[circle, draw, fill=black!100, inner sep=0pt, minimum size=2pt] (P3jP33) at (14,1.5) {};
\node[circle, draw, fill=black!100, inner sep=0pt, minimum size=2pt] (P3jP34) at (14,0) {};
\node[circle, draw, fill=black!100, inner sep=0pt, minimum size=2pt] (P3jP35) at (14,-1.5) {};
\draw (P3jP30) -- (P3jP31) -- (P3jP32) -- (P3jP35) -- (P3jP34) -- (P3jP33) -- (P3jP30);
\draw (P3jP30) -- (P3jP34) -- (P3jP32);
\draw (P3jP33) -- (P3jP31) -- (P3jP35);
\draw (P3jP31) -- (P3jP34);
\draw (P3jP30) -- (P3jP35);
\draw (P3jP32) -- (P3jP33);
\node (P3jP3) at (13,-2.5) {\tiny $P_3 \vee P_3$};
\end{tikzpicture}
\caption{The sets of minimal forbidden induced subgraphs for the properties $\cb(G)\leq 2$ (A) and $\mr(G,\mathbb{F}_2)\leq 2$ (B).}
\label{im:forbind2}
\end{figure}

\begin{thm}\label{thm:forbind2}
The class of graphs 
$$\{ G \mid \cb(G)\leq 2 \}$$
is the class of $\mathcal{F}$-free graphs, where $\mathcal{F}$ is the set of graphs shown in Figure~\ref{im:forbind2} (A).
\end{thm}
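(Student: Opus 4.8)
The plan is to establish the forbidden induced subgraph characterization by proving two directions: first, that each graph $F$ in the set $\cF$ shown in Figure~\ref{im:forbind2}~(A) satisfies $\cb(F) \geq 3$, and second, that these are the \emph{only} minimal obstructions, i.e.\ any graph $G$ with $\cb(G) \geq 3$ contains some member of $\cF$ as an induced subgraph. For the first direction, the graphs $3K_2$, $P_3+K_2$, and $P_4$ are handled quickly: since $\{G \mid \cb(G)\leq 1\}$ is exactly the class of $\{P_3, 2K_2\}$-free graphs (as noted just before Theorem~\ref{thm:forbind3}), any graph properly containing $P_3$ or $2K_2$ already has $\cb \geq 2$, and these three are the minimal graphs forcing $\cb \geq 3$ among the ``low minimum rank'' obstructions. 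For the remaining graphs $K_{3,3}$, $W_5$, full house, dart, $\ltimes$, and $P_3\vee P_3$, the efficient route is to invoke the theory developed in Section~\ref{sec:minrank}: for each one I would verify that its adjacency matrix has rank $2$ over $\mathbb{F}_2$ and that it is the \emph{unique} matrix of that rank fitting the graph (equivalently, that every diagonal entry is forced to be zero), so that Theorem~\ref{thm:mrneqcbsummary} gives $\cb = \mr(\cdot,\mathbb{F}_2)+1 = 3$. These are all complete multipartite-type or small dense graphs with $\mr(\cdot,\mathbb{F}_2)=2$, matching panel~(B) of the figure.

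The substantive direction is the converse: showing that every graph $G$ with $\cb(G)\geq 3$ contains an induced member of $\cF$. The strategy is to contrapose and assume $G$ is $\cF$-free, then prove $\cb(G)\leq 2$. The key leverage comes from the finiteness machinery of Theorem~\ref{thm:cbfindef}: a minimal forbidden induced subgraph $F$ for $\cb(G)\leq 2$ satisfies $\cb(F)\leq 4$, and more is true from the embedding argument there. So in principle this reduces to a bounded search, but I would instead argue structurally. Since $\cF$ contains $P_4$, $P_3+K_2$, and $3K_2$, an $\cF$-free graph has no induced $P_4$ (so $G$ is a cograph) and is restricted in how its edges and co-edges can be distributed. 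I would use the cograph structure (closure under complementation and disjoint union/join) together with the exclusion of $K_{3,3}$, $W_5$, full house, dart, $\ltimes$, and $P_3\vee P_3$ to drive a case analysis on the modular/join decomposition of $G$, building a subgraph complementation system of size at most $2$ at each step.

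The main obstacle I anticipate is the completeness of this converse: ensuring that the six ``rank-$2$'' obstructions together with the three ``low-order'' ones genuinely suffice, with no missing minimal graph. Here I would lean on the already-proven relationship $\{G\mid\cb(G)\leq 2\}\subseteq\{G\mid\mr(G,\mathbb{F}_2)\leq 2\}$ from \eqref{eq:cbleqksubsetmrleqk}. Because the minimal forbidden subgraphs for $\mr(G,\mathbb{F}_2)\leq 2$ are known (panel~(B): the graphs $K_{3,3}$, $W_5$, full house, dart, $\ltimes$, $P_3\vee P_3$, plus the low-order ones), any $G$ with $\cb(G)\geq 3$ either has $\mr(G,\mathbb{F}_2)\geq 3$---and then contains one of the panel~(B) obstructions, each of which I must check either lies in $\cF$ or contains a member of $\cF$---or has $\mr(G,\mathbb{F}_2)=2$ with $\cb(G)=3$, in which case Theorem~\ref{thm:mrneqcbsummary} and the uniqueness of the adjacency matrix pin down the structure tightly enough to locate one of the six rank-$2$ members of $\cF$. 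The delicate bookkeeping will be matching the two figure panels and confirming that each panel-(B) graph not literally in $\cF$ nonetheless contains an induced $\cF$-graph, which I expect to dispatch by direct inspection of the small graphs involved.
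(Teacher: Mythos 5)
Your ``easy direction'' misreads the figure, and the misreading matters because your whole argument is organized around the two panels. The graph $P_3 \vee P_3$ belongs to panel~(B) only, not to $\cF$; and of the six graphs you propose to handle by verifying ``rank $2$ and unique fitting matrix,'' only $K_{3,3}$ and $W_5$ actually have $\mr(\cdot,\mathbb{F}_2)=2$. The full house, dart, and $\ltimes$ are minimal forbidden subgraphs for $\mr(G,\mathbb{F}_2)\leq 2$ (that is what panel~(B) asserts), so their minimum rank over $\mathbb{F}_2$ is $3$ --- the paper even states explicitly in Section~\ref{sec:forests} that the full house has minimum rank $3$ over $\mathbb{F}_2$ --- and the verification you describe would simply fail for them. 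For those three the correct route is immediate: $\cb \geq \mr \geq 3$ by \eqref{eq:mrlower} (and in fact $\cb = 3$ by the odd-rank case of Corollary~\ref{cor:mrcb}); for $P_4$, $P_3+K_2$, and $3K_2$, Proposition~\ref{prop:linearforest} gives $\cb = n-k = 3$ directly, which is cleaner than your ``minimal graphs forcing $\cb\geq 3$'' remark. These slips are repairable, but they show the panel~(A)/(B) dichotomy that drives your proposal is not yet under control.

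The genuine gap is in the converse, in exactly the case that carries the theorem's content. Your reduction via \eqref{eq:cbleqksubsetmrleqk} is sound in outline: if $G$ is $\cF$-free with $\cb(G)\geq 3$, then either $\mr(G,\mathbb{F}_2)\geq 3$, whence $G$ contains a panel-(B) graph, each of which is in $\cF$ or contains one ($P_3\vee P_3$ contains an induced $W_5$: take $a_2$ as hub and $a_1,b_1,a_3,b_3$ as rim); or $\mr(G,\mathbb{F}_2)=2$ and $\cb(G)=3$. But in the latter case, ``pins down the structure tightly enough to locate one of the six rank-$2$ members of $\cF$'' is a hope, not a proof (and there are only two such members, $K_{3,3}$ and $W_5$). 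What is actually needed: by Theorem~\ref{thm:mrneqcbsummary} and Proposition~\ref{prop:strongfried}, $A(G)$ decomposes as in case~{\it ii.}\ of Lemma~\ref{lem:fried}, so by the triclique correspondence preceding Theorem~\ref{thm:mrcbt2}, $G$ is a complete multipartite graph with at most three parts plus isolated vertices; you must then prove a classification lemma that every such graph has $\cb\leq 2$ unless it contains $K_{3,3}$ or $W_5=K_{1,2,2}$, e.g.\ by exhibiting two-set systems for $K_{1,b}$, $K_{2,b}$, $K_{1,1,c}$ and proving the lower bounds $\cb(K_{3,3}),\cb(W_5)\geq 3$ --- lower bounds which are themselves part of what the theorem is meant to establish, so they need an independent argument (a finite check that every rank-$2$ fitting matrix has zero diagonal, via Theorem~\ref{thm:mrzerodiag}). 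None of this appears in your write-up, and your alternative cograph-decomposition route is likewise only gestured at. Note also that your argument leans on the panel-(B) characterization as known; it is in the literature, but the paper never relies on it: its own proof is self-contained, taking a minimal $\cF$-free counterexample $G$, finding a vertex $x$ with $\cb(G-x)=2$, and running an exhaustive case analysis on the two-set system $\{C_1,C_2\}$ for $G-x$ and the placement of $N(x)$ relative to $C_1\setminus C_2$, $C_1\cap C_2$, $C_2\setminus C_1$, in each case producing either a two-set system for $G$ or an induced member of $\cF$. Your route could in principle be completed into a genuinely different proof, but as written the decisive lemma is missing.
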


\begin{proof}
Suppose, for the sake of contradiction, that there exists a graph $G = (V, E)$ such that $\cb(G) > 2$, and $G$ does not contain any element of $\mathcal{F}$ as an induced subgraph. Furthermore, suppose that $G$ is minimal with these qualities; that is, every proper induced subgraph $H$ of $G$ has $\cb(G) \leq 2$. Then $G$ has no isolated vertices. Furthermore, $|G| \geq 5$ by Theorems~\ref{thm:easyvxupp} and~\ref{thm:vxuppbound}.
\par

The rest of the proof is outlined as follows. We show that there exists a vertex $x$ for which $\cb(G-x) = 2$. Letting $\mathscr{C} = \{C_1, C_2\}$ be a minimum subgraph complementation system for $G-x$, depicted in Figure~\ref{im:forbind2diagram}, we then show that $C_1 \cap C_2$ is nonempty, and that $G-x$ has no isolated vertices. Finally, we split into two cases: either one of the sets in $\mathscr{C}$ contains the other, or not. Contradictions are derived by showing that either $\cb(G) \leq 2$, or that $G$ contains an induced subgraph in $\mathcal{F}$.
\par

Firstly, there exists a vertex $x$ for which $\cb(G-x) = 2$.  We have $\cb(G-v) \geq \cb(G)-2 \geq 1$ for all $v\in V$, since we can add $N(v)$ and $N[v]$ to any minimum subgraph complementation system for $G-v$ to obtain one for $G$. Furthermore, if $\cb(G-v)=1$ for all $v\in V$, then $\mr(G-v,\mathbb{F}_2) = 1$ for all $v\in V$, so $G$ is a minimal forbidden induced subgraph for the property $\mr(G,\mathbb{F}_2) \leq 1$. These are the graphs $P_3$ and $2K_2$, which both have subgraph complementation systems of cardinality 2, so there exists a vertex $x\in V$ such that $\cb(G-x) = 2$.
\par

Let $\mathscr{C} = \{C_1, C_2\}$ be a minimum subgraph complementation system for $G-x$. Notice that both $|C_1| \geq 2$ and $|C_2| \geq 2$. We begin by showing that $C_1 \cap C_2$ is nonempty. Suppose $C_1\cap C_2 = \emptyset$. The isolated vertices of $G-x$ are a subset of $N_G(x)$. If every neighbor of $x$ is isolated in $G-x$, then $G$ has an induced $3K_2$. Thus, $x$ has a neighbor in at least one of $C_1$ and $C_2$. Without loss of generality, say $x$ has a neighbor in $C_1$. Then $x$ dominates $C_1$, otherwise $G$ has an induced $P_3 + K_2$ (if $x$ has no neighbor in $C_2$), or an induced $P_4$ (otherwise). If $x$ has no neighbor in $C_2$, then either $\cb(G) \leq 2$, or $G$ has an induced $P_3 + K_2$. In fact, $x$ dominates $C_2$, otherwise $G$ has an induced $P_4$. Then either $\cb(G) \leq 2$, or $G$ has an induced $\ltimes$, a contradiction. Therefore, $C_1\cap C_2$ is nonempty.
\par

Suppose there exists an isolated vertex in $G-x$. Then, for each edge $uv$ of $G-x$, either both or neither of $u$ and $v$ are neighbors of $x$, otherwise $G$ has an induced $P_4$. If there are at least two isolated vertices, then for each edge $uv$ of $G-x$, exactly one of $u$ and $v$ is a neighbor of $x$, otherwise $G$ has an induced $P_3+K_2$ or an induced $\ltimes$. We conclude there is exactly one isolated vertex in $G-x$. If $x$ has no other neighbor, then $G$ has an induced $P_3+K_2$, since $C_1$ and $C_2$ are not disjoint. Without loss of generality, say $x$ has a neighbor in $C_1$. In fact, we can conclude that $x$ dominates $C_1$, otherwise $G$ has an induced $P_4$. Then $x$ has a neighbor in $C_2$, so $x$ dominates $C_2$ as well, and $G$ has an induced $\text{dart}$. Therefore, $G-x$ has no isolated vertices.
\par

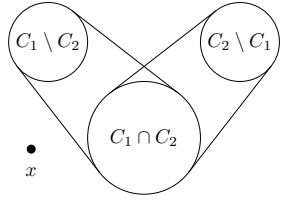
\begin{figure}
 \centering
 \scalebox{.75}{
 \begin{tikzpicture}[inner sep=0,outer sep=0]
  \pgfmathsetmacro{\rI}{10mm}
  \pgfmathsetmacro{\rC}{7mm}
  \pgfmathsetmacro{\rCC}{7mm}
  \node[draw,circle,minimum size=2*\rI pt] (I) {$C_1\cap C_2$};
  \node[draw,circle,xshift=-\rI-\rC,yshift=\rI+\rC,minimum size=2*\rC pt] (C) {$C_1\setminus C_2$};
  \node[draw,circle,xshift=\rI+\rC,yshift=\rI+\rC,minimum size=2*\rC pt] (CC) {$C_2\setminus C_1$};
  \draw[fill=black!100,black!100] (-2,-.2) circle (2pt); \draw (-2,-.6) node{$x$};
  \coordinate (c) at (barycentric cs:I=-\rC,C=\rI);
  \fill[red](c) circle (.2pt);
  \coordinate (d) at (barycentric cs:I=-\rCC,CC=\rI);
  \fill[red](d) circle (.2pt);
  \draw (tangent cs:node=C,point={(c)},solution=2) -- (tangent cs:node=I,point={(c)},solution=2);
  \draw (tangent cs:node=C,point={(c)},solution=1) -- (tangent cs:node=I,point={(c)},solution=1);
  \draw (tangent cs:node=CC,point={(d)},solution=2) -- (tangent cs:node=I,point={(d)},solution=2);
  \draw (tangent cs:node=CC,point={(d)},solution=1) -- (tangent cs:node=I,point={(d)},solution=1);
\end{tikzpicture}
 }
 \caption{A subgraph complementation system for $G-x$}
 \label{im:forbind2diagram}
\end{figure}

Figure~\ref{im:forbind2diagram} represents a minimum subgraph complementation system $\mathscr{C} = \{C_1,C_2\}$ of $G-x$. Without loss of generality, we assume that $|C_1| \leq |C_2|$. One may imagine $G-x$ as disjoint cliques $C_1\setminus C_2$ and $C_2\setminus C_1$, and an independent dominating set $C_1\cap C_2$. We now split into cases: either $C_1 \setminus C_2$ and $C_2 \setminus C_1$ are both nonempty, or $C_1 \subset C_2$. The former case is divided into subcases differentiating between the possible neighborhoods of $x$ in $G$. 
\\[1em]
\underline{Case 1:} Suppose that $C_1 \setminus C_2$ and $C_2 \setminus C_1$ are both nonempty. Throughout this section, vertices in $C_1\setminus C_2$ are be denoted by $u=u_0, u_1, u_2, \ldots$, vertices in $C_1\cap C_2$ by $w=w_0, w_1, w_2, \ldots$, and vertices in $C_2\setminus C_1$ by $z=z_0, z_1, z_2, \ldots$.
\par

Suppose $N(x) \subseteq C_1 \setminus C_2$. Then $G$ has an induced $P_4$ on vertex set $\{x, u, w, z\}$, where $u\in N(x)$, $w\in C_1\cap C_2$, and $z\in C_2 \setminus C_1$. A similar contradiction is derived if $N(x) \subseteq C_2\setminus C_1$.
\par

Suppose $N(x) \subseteq C_1\cap C_2$, and let $w = w_0 \in N(x)$. If $|C_2\setminus C_1|\geq 2$, say $z_0, z_1 \in C_2\setminus C_1$, then $G$ contains an induced $\ltimes$ on vertex set $\{x, z_0, z_1, w, u\}$, where $u\in C_1\setminus C_2$. Otherwise, since $|C_2|\geq |C_1|$ by assumption, $|C_1\setminus C_2| = |C_2\setminus C_1| = 1$. Let $C_1\setminus C_2 = \{u\}$, and let $C_2\setminus C_1 = \{z\}$. Since $|G|\geq 5$, we have $|C_1\cap C_2| \geq 2$. If $x$ has a non-neighbor in $C_1 \cap C_2$, say $w_1$, then $G$ has an induced $P_4$ on $\{x,w_0,u,w_1\}$. Otherwise, $N(x) = C_1\cap C_2$. If $C_1 \cap C_2 = \{w_0, w_1\}$, then there is a subgraph complementation system of $G$ of cardinality 2: $\{\{x,w_0,u,z\}, \{x,w_1,u,z\}\}$. Thus, there exist vertices $w_0, w_1, w_2 \in N(x) \cap (C_1\cap C_2)$, and $G$ has an induced $K_{3,3}$ on $\{x,u,z,w_0,w_1,w_2\}$.
\par

Suppose $x$ has neighbors $u = u_0 \in C_1\setminus C_2$ and $w = w_0 \in C_1\cap C_2$, but no neighbor in $C_2\setminus C_1$. Let $z \in C_2 \setminus C_1$. If $x$ has a non-neighbor $u_1 \in C_1 \setminus C_2$, then $G$ has an induced $\text{dart}$ on $\{x, u, u_1, w, z\}$, and if $x$ has a non-neighbor $w_1 \in C_1\cap C_2$, then $G$ has an induced $P_4$ on $\{x, u, w_1, z\}$. Thus, $C_1 \setminus C_2 \subset N(x)$, and $C_1 \cap C_2 \subset N(x)$. Since $G-x$ has no isolated vertices, we have $N(x) = C_1$. But then $G$ has a subgraph complementation system of cardinality 2: $\{C_1\cup \{x\}, C_2\}$. Thus, we arrive at a contradiction when $x$ has neighbors in $C_1\setminus C_2$ and $C_1 \cap C_2$ but not $C_2\setminus C_1$. By similar arguments, we derive a contradiction if $x$ has neighbors in $C_2\setminus C_1$ and $C_1\cap C_2$ but none in $C_1\setminus C_2$.
\par

Finally, suppose $x$ has neighbors $u = u_0 \in C_1\setminus C_2$, $w = w_0 \in C_1\cap C_2$, and $z = z_0 \in C_2\setminus C_1$. Since $|G|\geq 5$ and $|C_1| \leq |C_2|$, either $|C_1 \cap C_2| \geq 2$ or $|C_2 \setminus C_1| \geq 2$. Suppose $|C_2 \setminus C_1| \geq 2$. If $x$ has a non-neighbor $z_1 \in C_2 \setminus C_1$, then $G$ has an induced $P_4$ on $\{u,x,z_0,z_1\}$. Otherwise, $x$ dominates $C_2 \setminus C_1$, and $G$ has induced $\text{full house}$ on $\{u, x, w, z_0, z_1\}$. Thus, $|C_2 \setminus C_1| = |C_1 \setminus C_2| = 1$, and $|C_1\cap C_2|\geq 2$. If $x$ has 2 or more neighbors in $C_1\cap C_2$, say $w_0,w_1\in N(x) \cap C_1\cap C_2$, then $G$ has an induced $W_5$ on $\{x,u,w_0,w_1,z\}$. Thus, $x$ has a non-neighbor $w_1$ in $C_1 \cap C_2$. Suppose $C_1\cap C_2 = \{w_0,w_1\}$. Since $C_1\setminus C_2 = \{u\}$ and $C_2\setminus C_1 = \{z\}$, the two sets $\{x,u,w_0,z\}$ and $\{w_1,u,z\}$ form a subgraph complementation system of $G$. Now suppose that $|C_1\cap C_2|\geq 3$; say $w_0, w_1, w_2 \in C_1\cap C_2$. We have seen that $w_0$ is the only neighbor of $x$ in $C_1 \cap C_2$. Thus, $G$ contains an induced $\ltimes$ on $\{x,u,w_0,w_1,w_2\}$. We conclude that $x$ must not have neighbors in each of $C_1\setminus C_2$, $C_1\cap C_2$, and $C_2\setminus C_1$. This concludes Case 1.
\\[1em]
\underline{Case 2:} Suppose that $C_1 \setminus C_2$ is empty, {\em i.e.} $C_1\subset C_2$.
\par

Let $u_0, u_1\in C_1$ and $z = z_0 \in C_2\setminus C_1$. If $N(x)\subsetneq C_1$, say $u_0\in N(x)$ and $u_1 \in C_1\setminus N(x)$, and if $z \in C_2\setminus C_1$, then $G$ has an induced $P_4$ on $\{x, u_0, z, u_1\}$. If $N(x) = C_1$, then $G$ has a subgraph complementation system of cardinality 2: $\{C_2, C_1\cup \{x\}\}$. Thus, $x$ has a neighbor $z\in C_2 \setminus C_1$. If $u_0\in N(x)$ but $u_1, u_2 \in C_1$ are not neighbors of $x$, then $G$ has an induced $\ltimes$ on $\{x,u_0,u_1,u_2,z\}$. If $x$ has neighbors $u_0, u_1 \in C_1$, and a non-neighbor $u_2 \in C_1$, then $G$ has an induced $\text{dart}$ on $\{x,u_0,u_1,u_2,z\}$. Thus, $x$ dominates $C_1$. If $x$ also dominates $C_2$, then $G$ has a subgraph complementation system of cardinality 2: $\{C_1, C_2\cup \{x\}\}$. Thus, $x$ has a neighbor $z_0$ and a non-neighbor $z_1$ in $C_2 \setminus C_1$, and $G$ has an induced $W_5$ on $\{x,u_0,u_1,z_0,z_1\}$. This completes the proof. 
\end{proof}

\section*{Acknowledgements}

The authors would like to thank the anonymous referee for suggesting Theorem~\ref{thm:mrcbt2}. We would also like to thank Alexander Clifton, Eric Culver, Jiaxi Nie, Jason O'Neill, and Mei Yin for helpful discussions about symmetric differences of complete tripartite graphs following the 2021 Graduate Research Workshop in Combinatorics.

\bibliographystyle{plain}
\bibliography{references.bib}

\end{document}